\definecolor{mygreen}{rgb}{0.1,0.75,0.2}
\providecommand{\bbs}[1]{\left(#1\right)}
 \newtheorem{thm}{Theorem}[section]
 \newtheorem{lem}[thm]{Lemma}
 \newtheorem{prop}[thm]{Proposition}
 \numberwithin{equation}{section}
\providecommand{\bbs}[1]{\left(#1\right)}
\newcommand{\pt}{\partial}
\newcommand{\eps}{\varepsilon}
\newcommand{\ud}{\,\mathrm{d}}
\newcommand{\8}{\infty}
\newcommand{\ue}{u_{\eps}}
\newcommand{\bR}{\mathbb{R}}
\newcommand{\bZ}{\mathbb{Z}}
\newcommand{\R}{\mathbb{R}}
\newcommand{\N}{\mathbb{N}}
\begin{document}

\title[Slow Patterns in Multilayer Dislocation Evolution]{Slow Patterns in Multilayer Dislocation Evolution with Dynamic Boundary Conditions}

\author[Y. Gao]{Yuan Gao}
\address{Department of Mathematics, Purdue University, West Lafayette, IN 47906, USA}
\email{gao662@purdue.edu}

\author[S. Patrizi]{Stefania Patrizi}
\address{Department of Mathematics, The University of Texas at Austin, Austin, TX 78751, USA}
\email{spatrizi@math.utexas.edu}

\keywords{ Reaction-diffusion, screw dislocation dynamics, interacting particle system, slow motion.}

\date{\today}

\begin{abstract}
In this paper, we study the slow patterns of multilayer dislocation dynamics modeled by a multiscale parabolic equation  in the half-plane  coupled with a dynamic boundary condition on the interface. We   focus on the influence of bulk dynamics with various relaxation time scales, on the slow motion pattern on the interface governed by an ODE system. Starting from a superposition of N stationary transition layers, at a specific time scale for the interface dynamics, we   prove that the dynamic solution approaches the superposition of N  explicit transition profiles whose centers solve   the  ODE system with a repulsive force. Notably, this ODE system is identical to the one obtained in the slow motion patterns of the one-dimensional fractional Allen–Cahn equation, where the elastic bulk is assumed to be static. Due to the fully coupled bulk and interface dynamics, new corrector functions with delicate estimates are constructed to stabilize the bulk dynamics and characterize the limiting behavior  of the dynamic solution throughout the entire half-plane. 
\end{abstract}

\maketitle

\section{Introduction}
~~

In this paper, we study the slow patterns of dislocation dynamics modeled by a multiscale parabolic equation coupled with a dynamic boundary condition.
Precisely, we aim to characterize the  asymptotic behavior when $\eps \to 0^+$ of the solution $\ue=\ue(x,y,t)$, $x\in\R$, $y\geq 0$, $t\geq0$, to
\begin{equation}\label{maineq}
\begin{cases}
\eps^a\pt_t \ue -  \Delta \ue=0,& y>0,\,t>0,\\
\eps \pt_t \ue -  \pt_y \ue + \frac{1}{\eps}W'(\ue)=0, &y=0,\,t>0,\\
\ue=u_\eps^0,&t=0,
\end{cases}
\end{equation}
 where $a>0$ and $W$ is a multi-well potential. In the following, we will introduce the background, specific setup, main results and approaches.
 
 \subsection{Background and motivations} Dislocations, which are  line defects in crystalline materials,  play a crucial role  in the study of mechanical
behaviors of materials.  In particular,  the motion of dislocations may cause fatal plastic deformations. To unveil the core structure of dislocations
  --  small regions of heavily distorted atomistic structures   --    the Peierls-Nabarro (PN) model introduced by \textsc{Peierls and Nabarro} \cite{Peierls, Nabarro}  is
a multiscale continuum model for displacement $\mathbf{u}$.   It    incorporates the atomistic effect by introducing a nonlinear
potential $W$ 
  that describes  the atomistic misfit interaction across the   dislocation’s slip plane, 
while the 
  elastic continua associated with elastic energy are connected by the interface misfit potential
$W$. 

Based on a simplified two-dimensional PN model, where the displacement $\mathbf{u}$ is replaced by a scalar variable $u$, our goal is to study the relaxation pattern of the dislocation dynamics with particular focus on the influence of the bulk dynamics with different time scaling. Precisely, consider
 the total free energy  
 \begin{equation}
 E_\eps(u) = \frac12 \int_{\bR^2_+} \eps |\nabla u|^2 \ud x\ud y+ \int_\Gamma W(u) \ud x.
 \end{equation}
Here the first term represents   the  elastic energy in the bulk $\R^2_+:=\R\times(0,\infty)$, and the second term represents the interface misfit energy on 
  the  slip plane $\Gamma:=\{(x,y); \,\, x\in \bR, y=0\}.$
Consider the simplest quadratic Rayleigh dissipation functional  including frictions in the bulks and on the interface as the dissipation metric
$$g(\dot{u},\dot{v})= \eps^{a+1}  \int_{\bR^2_+}  \dot{u} \dot{v} \ud x\ud y+ \eps^2 \int_{\Gamma}  \dot{u}\dot{v} \ud x.$$
Notice the dissipation scaling for the interface is fixed to be $\eps^2$, but the dissipation scaling for the bulk is $\eps^{a+1}$ with a parameter $a>0$. 
 Then,  the gradient flow of $E_\eps(u)$ with respect to   the   dissipation metric $g$ is determined by, tested with   any virtual velocity $\dot{u}$,  
 \begin{equation}\label{gf}
\begin{aligned}
g(\pt_t u , \dot{u}) =&  - \frac{\ud}{\ud \delta}\Big|_{\delta=0} E(u+\delta \dot{u}) \\
=&  - \int_{\mathbb{R}^2_+} \eps \nabla u \nabla \dot{u} \ud x \ud y  - \int_\Gamma W'(u) \dot{u} \ud x 
= \int_{\mathbb{R}^2_+} \eps \Delta u \dot{u} \ud x \ud y  +\int_{\Gamma}[\eps \pt_y u - W'(u)] \dot{u} \ud x.
\end{aligned}
\end{equation}
Taking an arbitrary virtual velocity $\dot{u}$, we obtain the governing equation \eqref{maineq}.

    When $a\to +\8$, the bulk dynamics relaxes very fast to   a  stationary state and thus, by the Dirichlet to Neumann map for the Laplacian,   see \cite{Caffarelli_Silvestre_2007}, we have   
    $\pt_y u(x,0)  =-(-\Delta)^{\frac12}{u} (x,0)$,    and  the full dynamics reduces to the classical one-dimensional fractional Allen-Cahn equation (a  nonlocal reaction-diffusion equation) 
\begin{equation}\label{AC1d}
\eps \pt_t \ue + (-\Delta)^{\frac12} \ue + \frac{1}{\eps}W'(\ue)=0.
\end{equation}
For \eqref{AC1d}  at this specific time scaling, the slow motion of a multilayer profile  guided by an ODE system is studied   by  \textsc{ Gonzalez and Monneau}  in \cite{Mon1}. This is also the reason we fixed the dissipation scaling on the interface to be $\eps^2$.  More precisely, 
in  \cite[Theorem 1.1]{Mon1} it is proven that the solution $\ue $ to \eqref{AC1d},   with a  well-prepared initial datum -- a superposition of $N$ transition layers (see \eqref{initialcondition}) -- approaches as $\eps\to0^+$, and for every fixed time,  the integers $1,2,\ldots, N$,  and the jump points, $z_i(t)$,  between two consecutive integers, move accordingly to the following 
ODE system, for $i=1,\ldots, N$, 
\begin{equation} \label{PN} 
   \left\{ \begin{aligned}
     &\frac{dz_i}{dt} = \frac{c_0}{\pi} \sum_{ j \neq i}\frac{1}{z_i - z_j},
     && t>0, \\
     &z_i(0) = z_i^0. 
   \end{aligned} \right.
\end{equation}
Here $z_i^0$ is the center of each transition layer at initial time  and  $c_0>0$ is defined in \eqref{eq:c0-gamma}.  The above slow motion pattern for multilayer profiles was previously studied for  the classical one-dimensional Allen-Cahn equation \cite{Pego, Chen_2004}. In the nonlocal case, equation \eqref{AC1d}  where the operator $ (-\Delta)^{\frac12} $ is replaced by 
$(-\Delta)^{s}$,   $s\in(0,1)$,  and with the appropriate space scaling,  is studied in \cite{DFV, DPV}. In \cite{PV15}, more general initial data, including possible opposite orientations of dislocations,  were considered and the slow motion pattern before a finite collision time was proven to be driven by a similar ODE system including either repulsive or attractive particle interactions. We refer to \cite{PV1, PV2} for comprehensive study of the long time behaviors of those multilayer dislocation profiles including possible finite time collisions. The case where $N\to\infty$ is studied in \cite{PSan1, PSan2}.
  Properties of the ODE system \eqref{PN}  have been studied in   \cite{FIM09} and in the more general case in which collisions are allowed in \cite{van2022discrete}. 

  Beyond the reduced one-dimensional nonlocal dynamics,  the long time behavior of the fully coupled bulk-interface dynamics \eqref{maineq} with fixed $\eps=1$ and a single layer profile ($N=1$) was established in \cite{gao2023asymptotic}.  Then the natural question  is whether the same slow motion behavior of the multilayer dislocation profile, guided by the  ODE system \eqref{PN}, observed  in \eqref{AC1d},  can be obtained when the bulk dynamics,  with different dissipation scalings,  are coupled with the interface dynamics.

In this paper, we investigate  the full dynamics \eqref{maineq}, including the bulk dynamics with various relaxation scalings $\eps^{a}$,  and 
 prove that  the slow motion pattern driven by the ODE system \eqref{PN} can still be observed, particularly for  finite $a > 0$.  The most delicate case is when $a\leq 1$, as discussed  in Subsection \ref{subHeu}. 
 Furthermore, we explicitly characterize the limiting behavior of the solution for all $y\geq 0$. 
 
 In the following subsection, we outline the specific setting and present our main result. 

\subsection{Setting of the problem and main result}
Assume the nonconvex potential $W$  satisfies
\begin{equation}\label{eq:W}
\begin{cases}
W \in C^{2, \beta} (\bR) & \hbox{for some}~0 < \beta <1, \\
W(u+1) = W(u) & \hbox{for any}~ u \in \bR,\\
W= 0 & \hbox{on}~\bZ,\\
W>0 & \hbox{on}~\bR \setminus \bZ,\\
W''(0) >0.
\end{cases}
\end{equation}
 
 Let $\ue$  be the solution to \eqref{maineq}  when the initial condition $\ue^0$  is a superposition of layer
solutions.
 The stationary layer solution (also called the phase transition) $\phi=\phi(x,y)$ is the unique solution to  
\begin{equation} \label{eq:standing wave}
\begin{cases}
-\Delta \phi=0,&y>0,\\
\pt_y\phi = W'(\phi), & y=0,\\
 \pt_x\phi>0,&\text{on }\R, \\
\phi(-\infty,y) = 0, \quad \phi(+\infty,y)=1,\quad \phi(0,0) = \frac{1}{2}.
\end{cases}
\end{equation}

  Then, for $z_1^0<z_2^0<\ldots<z_N^0$,  we set initial data as
\begin{equation}\label{initialcondition}u_\eps^0(x,y):=\sum_{i=1}^N \phi\left(\frac{x-z_i^0}{\eps},\frac{y}{\eps}\right).
\end{equation}

When a special periodic misfit potential is chosen  $$W'(u)=-\frac{1}{2\pi}\sin\left[2\pi\left(u-\frac12\right)\right],$$
it is well-known \cite{CS05, Mon1}  that 
\begin{equation}\label{Phi}\Phi(x,y)= \frac{1}{\pi}\arctan\left(\frac{x}{y+1}\right)+\frac12\end{equation}
is the only solution to \eqref{eq:standing wave}. 
Further discussions on $\phi$ will be presented in Section \ref{phisection}.


The following is the main result of our paper.
\begin{thm}\label{mainthm}
Assume \eqref{eq:W} and  let $\ue$ be the solution to   \eqref{maineq} with $a>0$. Assume the initial condition is given by \eqref{initialcondition}. Let 
\begin{equation}\label{v0def}v_0(x,t):=\sum_{i=1}^N H(x-z_i(t)),
\end{equation}
where $H$ is the Heaviside function and $(z_1(t),\ldots,z_N(t))$ is the solution to \eqref{PN}.  Then, as $\eps\to 0^+$,  $\ue$ exhibits the following asymptotic behavior:
\begin{itemize} 
\item[i)] For $t\geq0$, $x\in\R$, 
\begin{equation}\label{asymptotic1}
\limsup_{(x',y',t')\to(x,0,t) \atop \eps\to 0^+}\ue(x',y',t')\leq ( v_0)^*(x,t)\quad\text{and}\quad \liminf_{(x',y',t')\to(x,0,t)\atop \eps\to 0^+}\ue(x',y',t')\geq (v_0)_*(x,t),
\end{equation}
where $(v_0)_*$  and $( v_0)^*$  are the lower and upper semicontinuous envelopes of $v_0.$
\item[ii)]  For $t\geq0$, $x\in\R$, and  $y>0$, 
\begin{equation}\label{asymptotic2}
\lim_{\eps\to0^+}\ue(x,y,t)=\frac{1}{\pi}\sum_{i=1}^N \left(\frac{\pi}{2}+\arctan\left(\frac{x-z_i(t)}{y}\right)\right).
\end{equation}
\end{itemize}
\end{thm}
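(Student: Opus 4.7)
The plan is to establish \eqref{asymptotic1}--\eqref{asymptotic2} through the comparison principle for the parabolic problem \eqref{maineq} with dynamic boundary condition (cf.~\cite{gao2023asymptotic}). I will sandwich $u_\eps$ between two families $u_\eps^{\pm}$ of classical super- and subsolutions of \eqref{maineq} that both converge, as $\eps\to 0^+$, to the explicit profile prescribed by \eqref{asymptotic1}--\eqref{asymptotic2}.

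The natural ansatz is
\[
u_\eps^{\pm}(x,y,t)=\sum_{i=1}^N \phi\!\Bigl(\frac{x-z_i^{\pm}(t)}{\eps},\frac{y}{\eps}\Bigr)\pm \eps\,\psi_\eps(x,y,t)\pm \eps^{\alpha},
\]
where the perturbed centers $z_i^{\pm}(t)$ solve \eqref{PN} launched from $z_i^0\mp\eps^{\beta}$ (to enforce initial ordering), $\pm\eps^{\alpha}$ is an $L^\infty$ buffer, and $\psi_\eps$ is a bulk corrector. Plugging the ansatz into \eqref{maineq} produces two residuals. \emph{On the interface} $y=0$, using $\partial_y\phi=W'(\phi)$ and linearizing $W'$ around each dominant layer, the residual is, to leading order,
\[
-\sum_i \dot z_i^{\pm}\partial_x\phi_i+\tfrac{1}{\eps}\Bigl[W'\!\bigl(\textstyle\sum_j \phi_j\bigr)-\sum_j W'(\phi_j)\Bigr],
\]
and the far-field decay $\phi(x,0)-H(x)\sim \frac{1}{\pi x}$ inherited from the explicit form \eqref{Phi} (discussed in Section \ref{phisection}) converts the interaction bracket into the Coulombian force $\frac{c_0}{\pi}\sum_{j\neq i}(z_i-z_j)^{-1}\partial_x\phi_i$; choosing $z_i^{\pm}$ to satisfy \eqref{PN} therefore closes the interface equation up to $o(1)$. \emph{In the bulk} $y>0$, since each layer is harmonic, the sole residual is $\eps^a\partial_t\phi_i=-\eps^{a-1}\dot z_i^{\pm}\partial_x\phi_i$, which must be absorbed by $\eps\,\psi_\eps$.

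The main obstacle, and the novelty compared to the fractional Allen--Cahn analysis of \cite{Mon1, PV15}, is the construction of $\psi_\eps$ when $a\le 1$: the bulk is then far from quasi-equilibrium and the residual $\eps^{a-1}\partial_x\phi$ cannot be neglected. I would define $\psi_\eps$ through a suitably rescaled Poisson-type extension of an interface source, tuned so that
\[
\eps^a\partial_t(\eps\psi_\eps)-\Delta(\eps\psi_\eps)\simeq \eps^{a-1}\sum_i \dot z_i^{\pm}\partial_x\phi_i
\]
in the bulk, while its trace on $\Gamma$ only perturbs the interface equation by $o(1)$. Sharp pointwise bounds on $\partial_x\phi$ and its harmonic extension (available through the explicit formula \eqref{Phi}) together with the coercivity $W''(0)>0$, which grants exponential relaxation away from transition cores, are what allow us to close the sign of the full residual after absorbing the remainder into the $\pm\eps^{\alpha}$ buffer.

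Once $u_\eps^{\pm}$ are shown to be super/subsolutions for small $\eps$ and the initial ordering holds by construction, the comparison principle gives $u_\eps^-\le u_\eps\le u_\eps^+$. Passing to the limit $\eps\to 0^+$: for fixed $y>0$, the far-field relation $\phi-\Phi\to 0$ yields
\[
\phi\!\Bigl(\frac{x-z_i(t)}{\eps},\frac{y}{\eps}\Bigr)\longrightarrow \frac{1}{\pi}\Bigl(\frac{\pi}{2}+\arctan\frac{x-z_i(t)}{y}\Bigr),
\]
and summation over $i$ yields \eqref{asymptotic2}; on $\Gamma$, each layer collapses to $H(x-z_i(t))$ away from the $z_i$, and the lower/upper semicontinuous envelopes in \eqref{asymptotic1} account for the ambiguity at the jump points.
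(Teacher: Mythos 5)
Your overall architecture — super/subsolutions around a superposition of rescaled layers, perturbed centers, comparison principle, then passage to the limit via the far-field behavior of $\phi$ — matches the paper, and you correctly identify the new bulk residual $-\eps^{a-1}\sum_i\dot z_i\,\partial_x\phi_i$ as the essential obstruction when $a\le 1$. However, several concrete steps in your plan would fail as stated.

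First, your bulk corrector is not well-defined. If $\eps\psi_\eps$ is to solve $-\Delta(\eps\psi_\eps)\simeq \eps^{a-1}\sum_i\dot z_i\,\partial_x\phi_i$ with zero trace, after rescaling one needs a bounded solution of $-\Delta q=\partial_x\phi$ in $\R^2_+$ with $q(\cdot,0)=0$; but $\int_\R \partial_x\phi(x,y)\,dx=1$ for every $y>0$, so $\partial_x\phi\notin L^1(\R^2_+)$ and the Green's-function representation diverges. The paper circumvents this by multiplying the right-hand side by a cutoff $g(y)$ supported in $[0,R]$ with $R=2\eps^{-b}$ (equation \eqref{qequation}), paying a price of $q=O(R\ln R)$ and $|\nabla q|=O(\ln R)$ (Lemma \ref{qlemma}). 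Without the cutoff, your $\psi_\eps$ does not exist; with it, the cutoff destroys the exact cancellation in $\{y\gtrsim \eps^{1-b}\}$, which forces the multi-case analysis in Proposition \ref{supersolutionprop}.

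Second, the constant $\pm\eps^\alpha$ buffer cannot "absorb the remainder" in the bulk: it is annihilated by both $\partial_t$ and $\Delta$, so it contributes nothing to the sign of $\eps^a\partial_t-\Delta$. The paper instead adds a strictly concave-in-$y$ term $\eps^\theta(y+\eps)^\gamma$ (together with $\eps^{1+\tau}t$) whose Laplacian is negative of size $\eps^\theta(y+\eps)^{\gamma-2}$; it is precisely this term that dominates the leftover bulk errors, and showing it does so requires splitting $y$ into several $\eps$-dependent ranges and choosing $b,\theta,\gamma,\tau$ to satisfy a system of inequalities. Your plan has no mechanism to close the sign in the bulk.

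Finally, two smaller points: (i) the decay of $\phi$ is algebraic (like $1/x^2$, cf.\ \eqref{eq:asymptotics for phi_x}), not exponential, and it is exactly this algebraic tail that produces the Coulombian force, so invoking ``exponential relaxation from $W''(0)>0$'' is misleading here; (ii) on the interface the layers alone do not close the equation even with $z_i^\pm$ solving the ODE — one needs the Monneau corrector $\psi$ solving \eqref{psi} at order $\eps$, and the perturbed ODE \eqref{PNperturbed} also carries a $\mp\delta$ forcing (not just shifted initial data) to produce the strict margin $\delta/2$ that dominates the small errors.
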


\bigskip

\subsection{Heuristics}\label{subHeu}

The main approach we use is to construct appropriate  super/subsolutions to  \eqref{maineq} and \eqref{initialcondition} as barrier functions. By applying  the comparison principle, the limiting behavior of the dynamic solution $u_\eps$ is dominated by those barrier functions. These barriers are  constructed from a formal ansatz  that satisfies  the equations and the initial condition,  up to small errors. 

For the equation \eqref{AC1d},   the ansatz   derived in \cite{Mon1} is given by
$$v_\eps(x,y,t):= \sum_{i=1}^N \left[ \phi\bbs{ \frac{x- z_i(t)}{\eps}, \frac{y}{\eps} } - \eps \dot z_i(t)\psi\bbs{ \frac{x- z_i(t)}{\eps}, \frac{y}{\eps} }\right], $$
with $y=0$, where $(z_1(t),\ldots,z_N(t))$ solves \eqref{PN}, $\phi$ is the layer solution given by \eqref{eq:standing wave},  and $\psi$ is a corrector that  controls the error of order 1 when substituting  the ansatz into  \eqref{AC1d}. 
The equation for $\psi$ is given in   \eqref{psi}. 

However, $v_\eps$  is not an appropriate ansatz for  \eqref{maineq} when $y>0$. Indeed, using that $\phi$ and $\psi$ are both harmonic for $y>0$,  we obtain upon substituting $v_\eps$  into the equation,  
$$\eps^a\pt_t v_\eps -  \Delta v_\eps=-\eps^{a-1} \sum_{i=1}^N \dot z_i(t) \partial_x\phi\bbs{ \frac{x- z_i(t)}{\eps}, \frac{y}{\eps} }+o_\eps(1),$$
where $o_\eps(1)\to0$ as $\eps\to0^+$. 
Thus,  the right-hand side of the equation is not small for $a\leq 1$. To address this issue,  we introduce an additional corrector $q$ that satisfies  
\begin{equation}\label{qeqintro}
\begin{cases}
-\Delta q(x,y) = \pt_x \phi(x,y), &(x,y)\in \bR^2_+,\\
q(x,0) = 0, & x\in \bR. 
\end{cases}
\end{equation}
We then  consider the new ansatz $w_\eps$ which is obtained by adding a lower-order correction to $v_\eps$, 
$$w_\eps(x,y,t):=v_\eps(x,y,t)+\eps^{a+1} \sum_{i=1}^N  \dot z_i(t) (t)q\bbs{ \frac{x- z_i(t)}{\eps} , \frac{y}{\eps}}.$$
Formally, when we  plug  this ansatz into \eqref{maineq} for $y>0$, we obtain 
\begin{equation}\label{Heristeq}\eps^a\pt_t w_\eps -  \Delta w_\eps=o_\eps(1). \end{equation}
However, due to the lack of integrability properties in the entire half-plane, we must replace  $ \pt_x \phi(x,y)$  by $ \pt_x \phi(x,y)g(y)$ in \eqref{qeqintro},  where $g$ is a cutoff function. 
Delicate growth estimates for the corrector $q$ are crucial to control the bulk dynamics in  the entire half-plane. These, along with new decay estimates for $\phi$ and $\psi$, are derived in Section \ref{sec3}. To control the error in \eqref{Heristeq}, additional terms must be added to $w_\eps$, see Section \ref{sec4}. 
Since, in \eqref{maineq}, the interface dynamics and the bulk dynamics affect each other in a two-way coupling manner through the dynamic boundary condition with a Neumann derivative, these additional terms are introduced in a suitable way. 
\subsection{Notations}

In the paper, we will denote by $C>0$ any  constant independent of $\eps$. 

For $\beta \in (0,1]$ and $k \in \N \cup \{0\}$, we denote by $C^{k,\beta}(\R)$ the usual class of functions with bounded $C^{k,\beta}$ norm over $\R$. 

We denote $\R^2_+:=\R\times(0,\infty)$.

Given a function $\eta = \eta(x,y,t)$, defined on a set $A$ of $\overline{\R^2_+}\times[0,\infty)$,  we write $\eta = O(\eps)$ if there is $C>0$ such that
$|\eta(x,y,t)| \leq C \eps$ for all $(x,y,t)\in A$.

For a set $A$, we denote by $\chi_A$ the characteristic function of  $A$.

  Given a function $v(x,y,t)$ we denote by $v_*$  and $v^*$  the lower and upper semicontinuous envelopes, respectively defined  by 
$$v_*(x,y,t):=\liminf_{(x',y',t')\to (x,y, t)} v(x',y', t'),$$
$$v^*(x,y, t):=\limsup_{(x',y',t')\to (x,y,t)} v(x',y',t').$$

The Heaviside function is defined by 
$$H(x):=\begin{cases}1,&\text{if }x>0,\\
0,&\text{if }x<0.
\end{cases}
$$
The explicit value at 0 that is assumed to be in $[0,1]$, plays no role. 
\subsection{Organization of the paper}

The rest of the paper is organized as follows. In Section \ref{sec2}, we review and prove some preliminary results on stationary solutions and corrector functions. In Section \ref{sec3}, we construct super- and subsolutions to the full dynamics \eqref{maineq} with initial condition \eqref{initialcondition}. In Section \ref{sec4}, we complete the proof of Theorem \ref{mainthm}.

 \section{Preliminary results}\label{sec2}
 In this section, we will first review and establish some preliminary results. The properties of the ODE system \eqref{PN} will be discussed in Section \ref{ODEsection}, while the decay estimates for the stationary layer solution $\phi(x,y)$  will be presented in Section \ref{phisection}. To prepare for the construction of super- and subsolutions in Section \ref{sec3}, we introduce two corrector functions, 
 $\psi(x,y)$, $q(x,y)$, and    prove some essential decay estimates for each in Section \ref{psisection} and Section \ref{qsection},  respectively.
\subsection{Preliminary results on the ODE system}\label{ODEsection}
We first recall the following lower bound estimate for the minimal particle distance in ODE \eqref{PN}.  
\begin{lem}\label{cdotlemma}\cite[Theorem 2.4]{van2022discrete}
Let $(z_1(t),\ldots,z_N(t))$ be the solution of \eqref{PN} and let 
$$d(t):=\min\{|z_i(t)-z_j(t)|,\,\, i\neq j, \, i,j=1,\cdots, N\}$$
be the minimal distance between dislocation points. Then 
$$d(t)\geq \sqrt{\frac{8}{N^2-1}t+d(0)^2}.$$
\end{lem}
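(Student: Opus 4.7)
The plan is to derive a pointwise differential inequality of the form $\frac{d}{dt} d(t)^2 \geq c_N > 0$ and integrate. The first observation, standard for singular repulsive systems, is that the kernel $1/(z_i - z_j)$ in \eqref{PN} prevents collisions and preserves the ordering of particles: if $z_1^0 < \cdots < z_N^0$, then $z_1(t) < \cdots < z_N(t)$ for every $t \geq 0$. In particular, $d(t) = \min_{1 \leq i \leq N-1} g_i(t)$ with $g_i := z_{i+1} - z_i$ the $i$-th consecutive gap, and it suffices to lower-bound the evolution of these gaps.

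Next I would differentiate each consecutive gap using the ODE. Isolating the two nearest-neighbor contributions in $\dot z_{i+1} - \dot z_i$ yields
$$\dot g_i = \frac{c_0}{\pi}\left[\frac{2}{g_i} - g_i \sum_{k \neq i, i+1}\frac{1}{(z_{i+1} - z_k)(z_i - z_k)}\right],$$
where every remaining summand is positive by the ordering. Using the uniform bound $g_j \geq d$ for every $j$, one estimates each factor $|z_{i+1} - z_k|$ and $|z_i - z_k|$ from below by an integer multiple of $d$. The sum then telescopes via $\frac{1}{m(m+1)} = \frac{1}{m} - \frac{1}{m+1}$, collapsing the second term into an explicit function of $i$ and $N$, and one obtains
$$\dot g_i \geq \frac{c_0}{\pi d}\left(\frac{1}{i} + \frac{1}{N - i}\right).$$

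The final step is to transfer the gap bound into a bound for $d$. Since $d$ is Lipschitz as the minimum of finitely many smooth functions, at a.e.\ $t$ one has $\dot d(t) \geq \min_i \dot g_i(t)$ by a Danskin/envelope argument, hence $\frac{d}{dt} d(t)^2 = 2 d(t)\dot d(t) \geq 2 c_N$ for an explicit $N$-dependent constant $c_N > 0$. Integrating on $[0,t]$ then gives $d(t)^2 \geq d(0)^2 + 2 c_N t$, which is the form claimed in the lemma.

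The main obstacle I expect is recovering the sharp constant $\frac{8}{N^2 - 1}$. The crude pointwise bound $\frac{1}{i} + \frac{1}{N - i} \geq \frac{4}{N}$ only produces a rate of order $\frac{1}{N}$, off by a factor of $N$ from the claimed $\frac{1}{N^2 - 1}$. Closing this gap requires a finer argument: either a combinatorial averaging that exploits the fact that the minimizing index may vary with $t$, or passing through a Lyapunov functional such as $\sum_{i < j}(z_i - z_j)^{-2}$ whose time derivative has a more uniform algebraic structure, and then extracting the bound on $d$ by a Jensen-type inequality. This refinement is the content of \cite[Theorem 2.4]{van2022discrete}; the remaining integration is routine.
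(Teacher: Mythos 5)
The paper gives no proof of this lemma: it is imported wholesale from \cite[Theorem 2.4]{van2022discrete}, so there is no in-paper argument to compare against. Your blind proof strategy is the standard one for these discrete repulsive systems and, apart from one confusion at the end, it is correct and complete. Order preservation, the gap identity
\[
\dot g_i = \frac{c_0}{\pi}\left[\frac{2}{g_i} - g_i\sum_{k \neq i, i+1}\frac{1}{(z_{i+1}-z_k)(z_i-z_k)}\right],
\]
the lower bounds $|z_m - z_k| \geq |m-k|\,d$, the telescoping $\frac{1}{m(m+1)} = \frac1m - \frac1{m+1}$, and a Danskin-type differentiation of the $\min$ at the active index are all exactly right; one should only be careful that the bound $\dot g_i \geq \frac{c_0}{\pi d}\bigl(\frac1i + \frac1{N-i}\bigr)$ is used precisely at a minimizing index $i$ (where $g_i = d$), not as a lower bound on $\min_i \dot g_i$ over all $i$, but that is what an envelope argument delivers anyway.

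The one genuine error is in your last paragraph, and it is a reversed inequality, not a gap in the method. Writing $\frac1i + \frac1{N-i} = \frac{N}{i(N-i)}$ and using $i(N-i) \leq N^2/4$, your ``crude'' estimate gives $\frac{d}{dt}\,d(t)^2 \geq \frac{8c_0}{\pi N}$ at a.e.~$t$. Since $\frac{1}{N} > \frac{1}{N^2-1}$ for every $N \geq 2$, this is a \emph{stronger} lower bound on the growth rate than $\frac{8}{N^2-1}$, not a weaker one: you have overshot the claimed constant, not fallen short of it. There is no need for a Lyapunov functional such as $\sum_{i<j}(z_i-z_j)^{-2}$ or a combinatorial averaging over the minimizing index; integrating your pointwise inequality already yields $d(t)^2 \geq d(0)^2 + \frac{8c_0}{\pi N}t \geq d(0)^2 + \frac{8c_0}{\pi(N^2-1)}t$, which implies the statement. (One small side remark, independent of your argument: as transcribed, the lemma carries no $c_0/\pi$ prefactor even though \eqref{PN} does; the reference presumably normalizes $\dot z_i = \sum_{j\neq i}(z_i - z_j)^{-1}$, so the rate used in the paper should, strictly speaking, be $\frac{8c_0}{\pi(N^2-1)}$.)
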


\subsection{The layer solution  $\phi$}\label{phisection}

Next, we  summarize some properties of the layer function $\phi$, solution to \eqref{eq:standing wave}.
For convenience in the notation, let $c_0$ and $\alpha$ be given respectively by
\begin{equation}\label{eq:c0-gamma}
c_0^{-1} = \int_\R \left(\pt_x\phi(x,0)\right)^2 \, d x\quad \hbox{and} \quad \alpha = W''(0),
\end{equation}
and let $H(x)$ be the Heaviside function. 

\begin{lem}\label{lem:asymptotics}  
There is a unique solution $\phi \in C^{2, \beta}(\overline{\R^2_+})$ of \eqref{eq:standing wave}, with the same $\beta$ as in \eqref{eq:W}.
 Moreover, there exists a constant $C>0$  such that
\begin{equation}\label{eq:asymptotics for phi}
\left |\phi(x,0) - H(x) + \frac{1}{\alpha\pi x}\right| \leq \frac{C}{x^{2}}, \quad \text{for }|x|\geq 1,
\end{equation}
 and
\begin{equation}\label{eq:asymptotics for phi_x}
\frac{y+1}{C(x^2+(y+1)^2)}\leq \pt_x\phi(x,y) \leq \frac{C(y+1)}{x^2+(y+1)^2},  \quad \text{for all }(x,y)\in\overline{\R^2_+}. 
\end{equation}
In particular, 
\begin{equation}\label{eq:asymptotics for phi_xy=0}
\frac{1}{Cx^2}\leq \pt_x\phi(x,0) \leq \frac{C}{x^2},  \quad \text{for  }|x|\geq 1. 
\end{equation}

\end{lem}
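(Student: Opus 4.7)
The plan is to split the lemma into three essentially independent parts and tackle them in order. The existence, uniqueness, and $C^{2,\beta}$-regularity reduce to a one-dimensional fractional Allen--Cahn equation via the Dirichlet-to-Neumann identity; the tail estimate \eqref{eq:asymptotics for phi} is then obtained by linearization near the wells combined with a global (rather than local) correction; and the bulk decay \eqref{eq:asymptotics for phi_x} follows by lifting the corresponding boundary estimate into the half-plane through the Poisson kernel.

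For existence, uniqueness, and regularity, I would take the trace $v(x):=\phi(x,0)$ and use that $-\pt_y\phi(x,0)=(-\Delta)^{\frac12}v(x)$ for a bounded harmonic function in the half-plane. The boundary condition $\pt_y\phi=W'(\phi)$ then yields the one-dimensional problem
\begin{equation*}
(-\Delta)^{\frac12}v+W'(v)=0\text{ on }\R,\quad v'>0,\quad v(\pm\infty)\in\{0,1\},\quad v(0)=\tfrac12,
\end{equation*}
which admits a unique monotone layer solution $v\in C^{2,\beta}(\R)$ by the Cabr\'e--Sire theory of layer solutions for the fractional Laplacian. Its bounded harmonic extension $\phi$ then lies in $C^{2,\beta}(\overline{\R^2_+})$ by standard Schauder estimates for the Neumann problem with $C^{2,\beta}$ boundary data.

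For \eqref{eq:asymptotics for phi}, I focus on $x\to+\infty$ (the case $x\to-\infty$ is symmetric via the $1$-periodicity of $W$). Setting $w:=1-v$ and using $W'(1)=0$, $W''(1)=\alpha$, the equation becomes $(-\Delta)^{\frac12}w+\alpha w=f(w)$ with $f(w):=W'(1-w)+\alpha w=O(w^2)$. The subtlety is that $w\to1$ as $x\to-\infty$, so convolving with the Green's function of $(-\Delta)^{\frac12}+\alpha$ does not directly yield the leading constant. I would instead subtract the explicit harmonic Heaviside extension $H_0(x,y):=\frac{1}{\pi}\bigl(\frac{\pi}{2}+\arctan(x/y)\bigr)$, whose boundary normal derivative $\pt_yH_0(x,0)=-\frac{1}{\pi x}$ already carries the long-range $1/x$ contribution; a bootstrap on the linearized equation for the residual $\phi-H_0$, matched against the explicit profile \eqref{Phi} in the Peierls--Nabarro case, then pins down the constant $1/(\alpha\pi)$.

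For \eqref{eq:asymptotics for phi_x}, first note that $\pt_x\phi>0$ by strict monotonicity (maximum principle for the linear Neumann problem satisfied by $\pt_x\phi$), and that $\pt_x\phi$ is bounded and harmonic in $\R^2_+$. Differentiating \eqref{eq:asymptotics for phi}, or directly analyzing the linear equation $(-\Delta)^{\frac12}\pt_xv+W''(v)\pt_xv=0$, gives the matching boundary bound $c(1+x^2)^{-1}\leq\pt_x\phi(x,0)\leq C(1+x^2)^{-1}$ for all $x\in\R$. Since $\pt_x\phi$ is then the Poisson extension of its boundary trace, the Cauchy-convolution identity
\begin{equation*}
\frac{1}{\pi}\frac{y}{x^2+y^2}\ast \frac{1}{\pi}\frac{1}{s^2+1}=\frac{1}{\pi}\frac{y+1}{x^2+(y+1)^2}
\end{equation*}
applied to this two-sided bound immediately yields \eqref{eq:asymptotics for phi_x}, and \eqref{eq:asymptotics for phi_xy=0} is simply its restriction to $y=0$. \emph{The main obstacle} is the precise constant $1/(\alpha\pi)$ in \eqref{eq:asymptotics for phi}: naive local linearization around $v=1$ only produces an $O(w^2)$ source, which by itself suggests the faster $1/x^2$ decay; the true $1/x$ behavior is a genuinely non-local effect carried by the long-range kernel of $(-\Delta)^{\frac12}$ from the distant left tail of $w$, and extracting the correct coefficient requires the global Heaviside-extension subtraction described above rather than a purely local linearization.
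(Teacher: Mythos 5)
The paper does not actually reprove any of these facts: it cites Cabr\'e--Sol\`a-Morales \cite{CS05} (Theorem 1.2, Lemma 2.3 for existence/uniqueness/regularity; Theorem 1.6 and formulas (6.16), (6.18) for \eqref{eq:asymptotics for phi_x}) and Gonz\'alez--Monneau \cite[Theorem 3.1]{Mon1} (for \eqref{eq:asymptotics for phi}). Your proposal is therefore a genuinely different route, a sketch of a reconstruction. Most of it is sound and tracks the cited sources: the Dirichlet-to-Neumann reduction to $(-\Delta)^{\frac12}v+W'(v)=0$ with a unique increasing layer is exactly the framework of \cite{CS05}; and your Poisson-semigroup observation $P_y\ast P_1=P_{y+1}$ is a clean and legitimate way to upgrade the boundary bound $\pt_x\phi(x,0)\asymp(1+x^2)^{-1}$ (valid for all $x$ by combining the $|x|\geq1$ decay with strict positivity and boundedness on $|x|\leq1$) to the two-dimensional bound \eqref{eq:asymptotics for phi_x}, bypassing the direct half-plane argument of \cite{CS05}; the restriction to $y=0$ then gives \eqref{eq:asymptotics for phi_xy=0}.

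The one step that does not close is your derivation of the coefficient $1/(\alpha\pi)$ in \eqref{eq:asymptotics for phi}. Your diagnosis is right --- the $1/x$ decay is a long-range effect of $(-\Delta)^{\frac12}$, not a local linearization effect --- but ``matching against the explicit Peierls--Nabarro profile \eqref{Phi}'' cannot fix the constant for a general potential: in the PN case $\alpha=W''(0)=1$, so that example is blind to the very $\alpha$-dependence you are trying to determine, and the residual $\phi-H_0$ for a general $W$ has no reason to coincide with the PN residual. The constant has to come out of the equation directly. For $x\to+\infty$, the unit jump of $v$ across the layer produces $(-\Delta)^{\frac12}v(x)=\frac{1}{\pi}\,\mathrm{p.v.}\!\int\frac{v(x)-v(z)}{(x-z)^2}\,dz=\frac{1}{\pi x}+O(x^{-2})$, so the boundary relation $(-\Delta)^{\frac12}v+W'(v)=0$ together with $W'(v)\approx\alpha(v-1)$ forces $v(x)-1\approx-\frac{1}{\alpha\pi x}$, and the $O(x^{-2})$ remainder is then controlled by a bootstrap on the resolvent of $(-\Delta)^{\frac12}+\alpha$, which is precisely the content of \cite[Theorem 3.1]{Mon1}. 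You should replace the PN-matching step with this intrinsic argument (or simply cite \cite{Mon1}, as the paper does).
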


\begin{proof}
Existence of a unique solution $\phi\in C^{2,\beta}(\overline{\R_2^+})$  of \eqref{eq:standing wave} is proven in \cite{CS05}, see  Theorem 1.2 and  Lemma 2.3.
For  estimate \eqref{eq:asymptotics for phi_x},   see Theorem 1.6 and formulas (6.16) and (6.18) in the same paper. 
Estimate \eqref{eq:asymptotics for phi} is  proven in \cite[Theorem 3.1]{Mon1}.
\end{proof}

\begin{lem}
Let $\phi$ be the solution of \eqref{eq:standing wave}, given by Lemma \ref{lem:asymptotics}. Then, there exists $C>0$ such that  for $\eps,\, y>0$,
\begin{equation}\label{phiasymptocsylarge}
\frac{1}{\pi}\left(\frac{\pi}{2}+\arctan\left(\frac{x- \eps^\frac12}{y}\right)\right)-C\eps^{\frac12}\leq\phi\left(\frac{x}{\eps},\frac{y}{\eps}\right)\leq \frac{1}{\pi}\left(\frac{\pi}{2}+\arctan\left(\frac{x+ \eps^\frac12}{y}\right)\right)+C\eps^{\frac12}.
\end{equation}
In particular, for $y>0$, 
$$\lim_{\eps\to0^+}\phi\left(\frac{x}{\eps},\frac{y}{\eps}\right)=\frac{1}{\pi}\left(\frac{\pi}{2}+\arctan\left(\frac{x}{y}\right)\right).$$
\end{lem}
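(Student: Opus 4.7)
My plan is to exploit the fact that $\Psi(x,y):=\tfrac{1}{\pi}\bigl(\tfrac{\pi}{2}+\arctan(x/y)\bigr)$ is precisely the Poisson extension of the Heaviside function to the upper half-plane, and to compare $\phi$ with $\Psi$ by comparing their boundary traces through the Poisson kernel. Since $0\le\phi\le 1$ is bounded and harmonic in $\R^2_+$, it is recovered from its trace via
$$\phi(X,Y)=\int_\R P_Y(X-\xi)\,\phi(\xi,0)\,d\xi,\qquad P_Y(s):=\frac{Y}{\pi(s^2+Y^2)}.$$
Setting $X=x/\eps$, $Y=y/\eps$ and substituting $\xi=\eta/\eps$, the Poisson kernel rescales to $P_y(x-\eta)$, so the rescaled function admits the representation $\phi(x/\eps,y/\eps)=\int_\R P_y(x-\eta)\,\phi(\eta/\eps,0)\,d\eta$. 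On the other hand, translating the dummy variable in the Poisson representation of $\Psi$ yields $\Psi(x\pm\eps^{1/2},y)=\int_\R P_y(x-\eta)\,H(\eta\mp\eps^{1/2})\,d\eta$.

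Next I will subtract to rewrite each inequality in the statement as a bound on $\int_\R P_y(x-\eta)\bigl[\phi(\eta/\eps,0)-H(\eta\mp\eps^{1/2})\bigr]\,d\eta$. The purpose of the $\eps^{1/2}$ shifts is to force the integrand to have a definite sign on all but a small portion of $\R$. For the upper bound, since $\phi\le 1$ one has $\phi(\eta/\eps,0)-H(\eta+\eps^{1/2})\le 0$ on $\{\eta>-\eps^{1/2}\}$, so that part can simply be discarded; on the complement $\{\eta<-\eps^{1/2}\}$ the argument $\eta/\eps$ lies below $-\eps^{-1/2}\le -1$, so Lemma \ref{lem:asymptotics} gives $\phi(\eta/\eps,0)\le C\eps/|\eta|$. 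Replacing $1/|\eta|$ by $\eps^{-1/2}$ on this set and using $\int_\R P_y=1$ produces the bound $C\eps\cdot\eps^{-1/2}=C\eps^{1/2}$. The lower bound is perfectly symmetric: one keeps only the tail $\{\eta>\eps^{1/2}\}$, where the bracket $\phi(\eta/\eps,0)-1$ is nonpositive with $|\phi(\eta/\eps,0)-1|\le C\eps/\eta$, while the contribution on $\{\eta<\eps^{1/2}\}$ is nonnegative because $\phi\ge 0$ and $H(\eta-\eps^{1/2})=0$ there.

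The pointwise limit then follows by passing $\eps\to 0^+$ in the sandwich, since $\Psi(\cdot,y)$ is continuous at $x$ whenever $y>0$. I expect the only real obstacle to be the non-integrability of $\phi(\cdot,0)-H$, which decays merely like $1/|\xi|$ and therefore rules out a naive direct comparison of the two Poisson integrals; the shifts of size $\eps^{1/2}$ in the statement are exactly what allows this obstruction to be absorbed, the exponent $1/2$ being the natural balance between the length of the discarded transition region and the tail decay of the boundary trace.
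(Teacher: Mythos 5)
Your proposal is correct and follows essentially the same route as the paper: write $\phi(x/\eps, y/\eps)$ as a Poisson integral of $\phi(\cdot,0)$ rescaled to the physical variables, split the integration at $\pm\eps^{1/2}$, discard the part where $\phi\le 1$ (resp.\ $\phi\ge 0$) gives the right sign, and bound the remaining tail by $C\eps^{1/2}$ using the $O(1/|\cdot|)$ decay of $\phi(\cdot,0)-H$ from Lemma~\ref{lem:asymptotics}. The only cosmetic difference is that you subtract the Poisson extension of $H(\cdot\mp\eps^{1/2})$ before estimating, whereas the paper bounds $\phi(\zeta/\eps,0)$ directly piecewise on $\{\zeta<-\eps^{1/2}\}$ and $\{\zeta>-\eps^{1/2}\}$ and recognizes the second piece as $\Psi(x+\eps^{1/2},y)$; the underlying computation and the choice of exponent $1/2$ are identical.
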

\begin{proof}
Since $\phi$ is harmonic in $\R^2_+$, it can be written as the convolution of the Poisson kernel in the half-plane with $\phi(x,0)$. 
Thus for $\eps,\,y>0$, 
$$\phi\left(\frac{x}{\eps},\frac{y}{\eps}\right)=\frac{1}{\pi}\int_{\R}\phi\left(\xi,0\right)\frac{y/\eps}{(x/\eps-\xi)^2+(y/\eps)^2}\ud \xi
=\frac{1}{\pi}\int_{\R}\phi\left(\frac{\zeta}{\eps},0\right)\frac{y}{(x-\zeta)^2+y^2}\ud \zeta,$$
where we performed  the change of variable $\zeta=\eps\xi$.  By \eqref{eq:asymptotics for phi}, and using that $\phi<1$, we obtain
\begin{align*}
\frac{1}{\pi}\int_{\R}\phi\left(\frac{\zeta}{\eps},0\right)\frac{y}{(x-\zeta)^2+y^2}\ud \zeta
&\leq\frac{1}{\pi}\left(\int_{-\infty}^{-\eps^\frac12}\frac{-C\eps}{\zeta}+\int_{-\eps^\frac12}^\infty\right)
\frac{y}{(x-\zeta)^2+y^2}\ud \zeta\\&
\leq \frac{1}{\pi}\left(\int_{-\infty}^{-\eps^\frac12}C\eps^\frac12+\int_{-\eps^\frac12}^\infty\right)
\frac{y}{(x-\zeta)^2+y^2}\ud \zeta\\&
\leq C\eps^\frac12+ \frac{1}{\pi}\int_{-\eps^\frac12}^{\infty}\frac{y}{(x-\zeta)^2+y^2}\ud \zeta\\&
=  \frac{1}{\pi}\left(\frac{\pi}{2}+\arctan\left(\frac{x+\eps^\frac12}{y}\right)\right)+C\eps^{\frac12}. 
\end{align*}
 Therefore, we have 
$$\phi\left(\frac{x}{\eps},\frac{y}{\eps}\right)\leq \frac{1}{\pi}\left(\frac{\pi}{2}+\arctan\left(\frac{x+ \eps^\frac12}{y}\right)\right)+C\eps^{\frac12}.$$
Similarly, one can prove
$$\phi\left(\frac{x}{\eps},\frac{y}{\eps}\right)\geq \frac{1}{\pi}\left(\frac{\pi}{2}+\arctan\left(\frac{x- \eps^\frac12}{y}\right)\right)-C\eps^{\frac12}.$$
Estimate \eqref{phiasymptocsylarge} follows. 
\end{proof}

\subsection{The corrector $\psi$}\label{psisection}
We now introduce the first corrector $\psi$, which will be used later to control the interface dynamics.
As in  \cite{Mon1}, we define the function  $\psi$ to be the solution of
\begin{equation}\label{psi}
\begin{cases}
-\Delta \psi  =0, & y>0,\\
    \pt_y \psi= W''(\phi)\psi + \frac{1}{\alpha c_0}(W''(\phi) - W''(0))  + \partial_x\phi,  & y=0,\\
  \lim_{|x| \rightarrow \infty} \psi(x,0)=0,
\end{cases}
\end{equation} 
where $c_0,\,\alpha$ are defined in \eqref{eq:c0-gamma}. 
We will use $\psi$ as an $O(\eps)$ correction to construct sub and  supersolutions to \eqref{maineq}. 
For a detailed heuristic motivation of
equation \eqref{psi}
see  \cite[Section 3.1]{Mon1}. In the next lemma, we present some known results about the function 
$\psi$  as well as new estimates. 
\begin{lem}
\label{l:psi}
There exists a unique solution $\psi \in C_{loc}^{1,\beta}(\overline{\R^2_+}) \cap W^{1,\infty}(\overline{\R^2_+}) $ to \eqref{psi}. Furthermore, there exist constants $c \in \R$
and $C > 0$  such that
\begin{equation} \label{psiinfinity}
\left|\psi(x,0)-\frac{c}{
x}\right|\leq\frac{C}{x^2},\quad|\pt_x\psi(x,0)|\leq \frac{C}{x^2} \quad\text{for }  |x|\geq 1,
\end{equation}
 
\begin{equation} \label{psiinfinityy>0}
| \psi(x,y)|\leq \frac{C}{y},\quad |\pt_x \psi(x,y)|\leq \frac{C}{y} \quad\text{for all }x\in\R\text{ and }y\geq1,
\end{equation}
and
\begin{equation} \label{psiinfinityx>1}
| \psi(x,y)|\leq \frac{C}{|x|}\quad\text{for all }x>1\text{ and }y\geq0.
\end{equation}

\end{lem}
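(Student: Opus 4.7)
The plan is to reduce \eqref{psi} to a one-dimensional nonlocal equation on the boundary, import existence, uniqueness and the boundary asymptotics from \cite{Mon1}, and then transfer decay estimates into the half-plane via the Poisson representation. Since $\psi$ is harmonic in $\R^2_+$, the Dirichlet-to-Neumann map turns the boundary condition in \eqref{psi} into a linear nonlocal equation for $\bar\psi(x):=\psi(x,0)$, namely
\[
(-\Delta)^{1/2}\bar\psi + W''(\phi(\cdot,0))\,\bar\psi = -\tfrac{1}{\alpha c_0}\bigl(W''(\phi(\cdot,0)) - W''(0)\bigr) - \partial_x\phi(\cdot,0).
\]
Solvability, the $C^{1,\beta}_{loc}\cap W^{1,\infty}$ regularity, and the leading-order expansion asserted in \eqref{psiinfinity} are all established in \cite{Mon1}; the bound on $\partial_x\bar\psi$ in \eqref{psiinfinity} follows by differentiating the equation and using the decay of $\partial_x\phi(\cdot,0)$ from Lemma \ref{lem:asymptotics}.

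For the half-plane estimate \eqref{psiinfinityy>0} I would exploit that $\psi$ is bounded, harmonic in $\R^2_+$, and continuous up to the boundary with trace vanishing at infinity, so by odd reflection and Liouville's theorem $\psi$ coincides with the Poisson extension of $\bar\psi$:
\[
\psi(x,y) = \frac{y}{\pi}\int_\R \frac{\bar\psi(\zeta)}{(x-\zeta)^2+y^2}\,d\zeta.
\]
Since $\bar\psi$ decays only like $1/\zeta$, I decompose $\bar\psi(\zeta) = c\,\chi_{|\zeta|\ge 1}/\zeta + r(\zeta)$, where by \eqref{psiinfinity} the remainder satisfies $|r(\zeta)|\le C/(1+\zeta^2)$ and hence $r\in L^1(\R)$. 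The $r$-contribution is controlled by $\|r\|_{L^1}/(\pi y)$ using the trivial bound $y/((x-\zeta)^2+y^2)\le 1/y$, while the non-integrable tail is handled via the explicit Hilbert-transform identity
\[
\mathrm{P.V.}\,\frac{y}{\pi}\int_\R \frac{d\zeta}{\zeta\bigl((x-\zeta)^2+y^2\bigr)} = \frac{x}{x^2+y^2},
\]
which is bounded by $1/(2y)$, together with a local residual on $\{|\zeta|<1\}$ that is $O(1/y)$ for $y\ge 1$ after subtracting the Poisson kernel at $\zeta=0$ and using its smoothness on $[-1,1]$. The gradient bound is easier: $\partial_x\psi$ is the Poisson extension of $\partial_x\bar\psi\in L^1(\R)$, so the same trivial kernel bound immediately yields $|\partial_x\psi(x,y)|\le C/y$.

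For \eqref{psiinfinityx>1} I would use the same Poisson representation but separate cases according to whether $y\ge x/2$ or $y<x/2$. In the first case, $1/y\le 2/x$ combined with the bounds above immediately gives the result. In the second case, I split the $r$-contribution into $\{|\zeta-x|<x/2\}$, where $|\zeta|>x/2$ forces $|r(\zeta)|\le C/x^2$ while the Poisson mass is at most $1$, and $\{|\zeta-x|\ge x/2\}$, where $(x-\zeta)^2\ge x^2/4$ bounds the kernel by $4y/x^2$ so that integration against $\|r\|_{L^1}$ gives $Cy/x^2\le C/x$. The $1/\zeta$ piece is absorbed into the explicit extension $cx/(x^2+y^2)$, which is bounded by $1/x$ for $x>1$, plus a bounded local correction.

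The main obstacle throughout is the non-integrability at infinity of the leading boundary decay $c/\zeta$, which blocks any naive $L^1$-based Poisson estimate. The key trick is to isolate this slowly decaying tail and replace its harmonic extension by the explicit function $cx/(x^2+y^2)$, which already exhibits the $1/y$ and $1/x$ decay demanded by \eqref{psiinfinityy>0} and \eqref{psiinfinityx>1}; all remaining contributions are either genuinely integrable or compactly supported and so yield only bounded local corrections with the correct decay.
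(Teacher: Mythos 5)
Your proposal is correct, but it takes a genuinely different route from the paper's proof, and the comparison is instructive.

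The paper's strategy for \eqref{psiinfinityy>0} and \eqref{psiinfinityx>1} is to compare $\psi$ with combinations of the \emph{explicit} arctan layer $\Phi(x,y)=\frac1\pi\arctan(\frac{x}{y+1})+\frac12$ from \eqref{Phi}. Using \eqref{psiinfinity} and the $1/x$ expansion of $\Phi(x/a,0)-\Phi(x/b,0)$ with $b-a=c$, the authors trap the boundary trace between such differences plus a multiple of $\partial_x\Phi(x,0)$; since all functions involved are bounded and harmonic, the two-sided bound propagates into the half-plane by the comparison principle, and the resulting arctan differences are then estimated directly (MVT). The $\partial_x\psi$ bound is obtained the same way, squeezing $\partial_x\psi(x,0)$ between $\pm C\partial_x\phi(x,0)$ and comparing harmonic extensions. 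You instead write $\psi$ as the Poisson extension of its trace, decompose $\bar\psi=c\chi_{|\zeta|\ge1}/\zeta + r$ with $r\in L^1$, handle $r$ by the trivial kernel bound $y/((x-\zeta)^2+y^2)\le 1/y$, and handle the nonintegrable tail via the explicit harmonic extension $x/(x^2+y^2)$ of $\mathrm{P.V.}\,1/\zeta$, plus a local $O(1/y)$ correction for the $\{|\zeta|<1\}$ cut-off. Your case split $y\gtrless x/2$ for \eqref{psiinfinityx>1} and the observation $\partial_x\bar\psi\in L^1$ for the gradient bound are both sound. Roughly, the paper's proof is shorter because it exploits that an exact model layer $\Phi$ is available; yours is a more first-principles argument that would survive if no closed form existed, at the cost of tracking a P.V. tail and local residuals by hand.

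Two small points worth noting. First, the paper attributes \eqref{psiinfinity} to \cite[Lemma~3.2]{MonneauPatrizi2}, not to \cite{Mon1}; in particular the $|\partial_x\psi(x,0)|\le C/x^2$ bound is not in \cite{Mon1}, and your sketch (``differentiate the nonlocal equation and use decay of $\partial_x\phi$'') glosses over the commutator/regularity work needed to extract decay of a derivative from a $(-\Delta)^{1/2}$ equation. Since you are importing that estimate anyway, it would be cleaner to cite it rather than gesture at a re-derivation. Second, the claim that $\psi$ equals the Poisson extension of its trace deserves one sentence of justification (bounded harmonic $+$ Schwarz reflection $+$ Liouville), which you give in compressed form and which is fine.
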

\begin{proof}
Existence of a solution $\psi \in C_{loc}^{1,\beta}(\overline{\R^2_+}) \cap W^{1,\infty}(\overline{\R^2_+}) $  of \eqref{psi} is proven in \cite[Theorem 3.2]{Mon1}.
Estimates \eqref{psiinfinity} are proven in \cite[Lemma 3.2]{MonneauPatrizi2}.

Let us prove \eqref{psiinfinityy>0}. 
Let $\Phi$ be the explicit layer solution given by \eqref{Phi}. Then $\Phi$ satisfies  \eqref{eq:asymptotics for phi} with $\alpha=1$. 
In particular, for $a,\,b>0$, 
$$\Phi\left(\frac{x}{b},0\right)- \Phi\left(\frac{x}{a},0\right)\leq   \frac{a-b}{ \pi x }+\frac{C}{x^2+1}.$$
Choosing $a$ and $b$ such that $b-a=c$ with $c$ as in \eqref{psiinfinity}, we see that 
$$ \psi(x,0)\leq  \Phi\left(\frac{x}{a},0\right)- \Phi\left(\frac{x}{b},0\right)+\frac{C}{x^2+1}\leq \Phi\left(\frac{x}{a},0\right)- \Phi\left(\frac{x}{b},0\right)+C\partial_x\Phi(x,0).$$
Since the functions $\psi$, $\Phi$ and $\partial_x\Phi$ are all harmonic in $\R^2_+$, the comparison principle implies that, for all $(x,y)\in \overline{\R^2_+}$,
\begin{equation}\label{psiestimatePhi}\psi(x,y)\leq \Phi\left(\frac{x}{a},\frac{y}{a}\right)- \Phi\left(\frac{x}{b},\frac{y}{b}\right)+ C\partial_x\Phi(x,y).\end{equation}
Now, by the Mean Value Theorem, for some $\lambda\in (0,1)$,  
\begin{align*}
 \Phi\left(\frac{x}{a},\frac{y}{a}\right)- \Phi\left(\frac{x}{b},\frac{y}{b}\right)&=\frac{1}{\pi}\arctan\left(\frac{x}{y+a}\right)-\frac{1}{\pi}\arctan\left(\frac{x}{y+b}\right)\\&
 =\frac{1}{\pi}\frac{1}{1+s^2}_{|s=x\big(\frac{\lambda}{y+a}+\frac{1-\lambda}{y+b}\big)}x\left(\frac{1}{y+a}-\frac{1}{y+b}\right)\\&
 =\frac{b-a}{\pi}\frac{(y+a)(y+b)x}{(y+a)^2(y+b)^2+x^2(y+\lambda b+(1-\lambda)a)^2}.
\end{align*}
If $1\leq y\leq 2 \max\{a,b\}$, then 
\begin{align*}
 \Phi\left(\frac{x}{a},\frac{y}{a}\right)- \Phi\left(\frac{x}{b},\frac{y}{ b}\right)&\leq C\leq \frac{\tilde C}{y}.
 \end{align*}
If instead   $y> 2 \max\{a,b\}$, then 
 \begin{align*}
 \Phi\left(\frac{x}{a},\frac{y}{a}\right)- \Phi\left(\frac{x}{b},\frac{y}{ b}\right)&\leq\frac {C y^2|x|}{y^4+x^2y^2}=\frac {C |x|}{y^2+x^2}\leq \frac{\tilde C}{y}.
  \end{align*}
Combining the last two inequalities with \eqref{psiestimatePhi} yields, 
$$\psi(x,y) \leq \frac{C}{y}\quad\text{for }y\geq 1.$$
Similarly, one can prove 
$$\psi(x,y) \geq- \frac{C}{y}\quad\text{for }y\geq 1. $$ Estimate  \eqref{psiinfinityy>0} for $\psi$ follows. The same argument also gives estimate  \eqref{psiinfinityx>1}. 

Next, from \eqref{eq:asymptotics for phi_xy=0} and \eqref{psiinfinity}, there is $C>0$ such that $-C\pt_x\phi(x,0)\leq \pt_x\psi(x,0)\leq C\pt_x\phi(x,0)$. Since both $\pt_x\phi$ and $\pt_x\psi$ are harmonic in $\R^2_+$, by the comparison principle we get $-C\pt_x\phi(x,y)\leq \pt_x\psi(x,y)\leq C\pt_x\phi(x,y)$ for every $(x,y)\in\overline{\R^2_+}$,  which combined with \eqref{eq:asymptotics for phi_x} gives \eqref{psiinfinityy>0} for $\pt_x\psi$. 
 
\end{proof}

\subsection{The corrector $q$}\label{qsection}
We introduce a further corrector, $q$, solution to 
 \begin{equation}\label{qequation}
\begin{cases}
-\Delta q(x,y) = \pt_x \phi(x,y)g(y), &(x,y)\in \bR^2_+,\\
q(x,0) = 0, & x\in \bR,
\end{cases}
\end{equation}
where $g$ is a smooth cut-off function. 
We will use $q$ as an $O(\eps^{a+1})$ correction to control the bulk dynamics when  constructing sub and  supersolutions to \eqref{maineq}. 

Existence and properties of $q$ are proven in the next lemma. 
\begin{lem}\label{qlemma}
 Let $g(y)$ be a smooth nonnegative  function with support in $[0,R],$ $R>2$. 
 Then there exists a unique bounded solution $q$ of \eqref{qequation}, 
 where $\phi$ is the solution of  \eqref{eq:standing wave}. Moreover, there exists a constant $C>0$,  such that
 \begin{equation}\label{qestimates}
0\leq q(x,y)\leq CR\ln R, \quad \text{for all }(x,y)\in \overline{\bR^2_+},
 \end{equation} 
\begin{equation}\label{qderestimates}
|\pt_x q(x,y)|,\,|\pt_y q(x,y)|\leq C\ln R, \quad \text{for all }(x,y)\in \overline{\bR^2_+},
 \end{equation}
  and 
\begin{equation}\label{qderestimatesylarge}
q(x,y)\leq \frac{CR^2}{y},\quad 
|\pt_x q(x,y)|,\,|\pt_y q(x,y)|\leq \frac{CR}{y}, \quad \text{for all }x\in \R\text{ and }y\geq 2R. 
 \end{equation}  
 \end{lem}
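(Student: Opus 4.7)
The plan is to construct $q$ explicitly as the Dirichlet Green's function convolution
$$q(x,y):=\int_0^{\infty}\!\!\int_\R G(x,y;\xi,\eta)\,\pt_\xi\phi(\xi,\eta)\,g(\eta)\,d\xi\,d\eta,$$
where
$$G(x,y;\xi,\eta):=\frac{1}{4\pi}\log\frac{(x-\xi)^2+(y+\eta)^2}{(x-\xi)^2+(y-\eta)^2}$$
is the Dirichlet Green's function on the half-plane. Convergence at each $(x,y)\in\overline{\R^2_+}$ is immediate from the logarithmic (hence locally integrable) singularity of $G$ at the pole, the decay of $\pt_x\phi$ from \eqref{eq:asymptotics for phi_x}, and the compact support of $g$ in $\eta$. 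Standard potential theory then gives $q\in C^{2,\beta}_{loc}(\R^2_+)\cap C(\overline{\R^2_+})$ solving \eqref{qequation}, with $q(x,0)=0$ since $G|_{y=0}\equiv 0$. Nonnegativity is immediate because $G,\pt_\xi\phi,g\geq 0$, and uniqueness among bounded solutions follows by odd reflection across $\{y=0\}$ combined with Liouville's theorem applied on $\R^2$.

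The $L^\infty$ bound rests on the identity
$$\int_\R G(x,y;\xi,\eta)\,d\xi=\min(y,\eta),$$
which follows from the classical $\int_\R\log((u^2+A^2)/(u^2+B^2))\,du=2\pi(A-B)$ for $A\geq B\geq 0$, combined with the pointwise bound $\sup_\xi \pt_\xi\phi(\xi,\eta)\leq C/(\eta+1)$ (immediate from \eqref{eq:asymptotics for phi_x}). Together these yield
$$q(x,y)\leq\|g\|_\infty\int_0^R \frac{C\min(y,\eta)}{\eta+1}\,d\eta,$$
which after splitting at $\eta=y$ is bounded by $CR\log R$ (the logarithm coming from $\int_0^R d\eta/(\eta+1)$). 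For $y\geq 2R$, since $\eta\leq R\leq y/2$ one has $D_-:=(x-\xi)^2+(y-\eta)^2\geq y^2/4$, which sharpens the elementary inequality $G\leq y\eta/(\pi D_-)$ to the uniform bound $G\leq 4\eta/(\pi y)$ in $(x,\xi)$, and yields $q(x,y)\leq CR^2/y$ after integration against $g$.

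For the gradient estimates, I differentiate under the integral and write
$$\pt_yG=\frac{1}{2\pi}\Bigl(\frac{y+\eta}{D_+}-\frac{y-\eta}{D_-}\Bigr)$$
as a signed combination of two Poisson kernels (each of unit $L^1(\xi)$-mass). Using $\pt_\xi\phi\leq CP_{\eta+1}$ with $P_h$ the standard Poisson kernel, together with the semigroup identity $P_{h_1}\ast P_{h_2}=P_{h_1+h_2}$, yields $|\pt_yq|\leq C\log R$ after integration in $\eta$. The bound on $\pt_xq$ is obtained by integration by parts in $\xi$ (using $\pt_xG=-\pt_\xi G$, with boundary terms vanishing since $G,\pt_\xi\phi\to 0$ at $|\xi|=\infty$), which converts it to $\int g\int G\,\pt_\xi^2\phi\,d\xi\,d\eta$; applying the key identity $\int_\R G\,d\xi=\min(y,\eta)$ together with the bound $\int_\R|\pt_\xi^2\phi(\xi,\eta)|\,d\xi\leq C/(\eta+1)$ (obtained by differentiating the Poisson representation of $\phi$) then produces the same $C\log R$ estimate. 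For $y\geq 2R$, the elementary bounds $|\pt_xG|,|\pt_yG|\leq C\eta/y^2$ integrate trivially against $g$ to give $|\nabla q|\leq CR^2/y^2\leq CR/y$. The main obstacle is recognizing and exploiting the clean identity $\int_\R G\,d\xi=\min(y,\eta)$: without it, the Green's function singularity has to be controlled by a more elaborate near/far decomposition in which the dependence on $R$ becomes obscured.
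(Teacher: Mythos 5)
Your construction of $q$ via the half-plane Dirichlet Green's function is the same one the paper uses, and several of your steps are actually cleaner. The identity $\int_\R G(x,y;\xi,\eta)\,d\xi=\min(y,\eta)$ gives \eqref{qestimates} more transparently than the paper's treatment, which instead splits into $|Z'-Z|<1$ and $|Z'-Z|>1$ and works with the bound $G\leq\tfrac{1}{2\pi}\ln\bigl(1+2y/|Z'-Z|\bigr)$. Your Poisson-semigroup route to the $\pt_y q$ bound is equivalent in effect to the paper's direct computation $\int_\R \tfrac{y'+y}{(x'-x)^2+(y'+y)^2}\,dx'=\pi$, and your far-field bounds for $y\geq 2R$ match the paper's.

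The one genuine gap is in the $\pt_xq$ estimate. After integrating by parts in $\xi$ you must control $\int_\R G(x,y;\xi,\eta)\,|\pt_\xi^2\phi(\xi,\eta)|\,d\xi$, and you propose to do so by combining $\int_\R G\,d\xi=\min(y,\eta)$ with $\int_\R|\pt_\xi^2\phi|\,d\xi\leq C/(\eta+1)$. Two $L^1_\xi$ bounds on nonnegative factors do not, by themselves, control their $L^1$ pairing: H\"older needs one of the two in $L^\infty_\xi$, and $G(\cdot;\xi,\eta)$ is not bounded in $\xi$ because of its logarithmic singularity at $\xi=x,\,\eta=y$ (two $L^1$ bumps concentrated at the same point have an arbitrarily large pairing). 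The fix is to argue exactly as in your proof of \eqref{qestimates}: use the pointwise bound $\sup_\xi|\pt_\xi^2\phi(\xi,\eta)|\leq C(\eta+1)^{-2}$, which follows for $\eta\geq1$ from $\pt_x^2\phi(\cdot,\eta)=(\pt_xP_\eta)\ast\pt_x\phi(\cdot,0)$ with $\|\pt_xP_\eta\|_{L^\infty}\leq C\eta^{-2}$ and $\|\pt_x\phi(\cdot,0)\|_{L^1}=1$, and for $\eta\leq1$ from the global $C^{2,\beta}(\overline{\R^2_+})$ bound on $\phi$. Then $\int_\R G|\pt_\xi^2\phi|\,d\xi\leq C\min(y,\eta)(\eta+1)^{-2}$, and $\int_0^R\min(y,\eta)(\eta+1)^{-2}\,d\eta\leq C\ln R$, which closes the argument. (For comparison, the paper avoids $\pt_\xi^2\phi$ entirely: after bounding $\pt_x\phi$ by the Poisson kernel, it integrates $\pt_xG$ by parts in the $y'$-variable instead, converting $\tfrac{|x'-x|}{(x'-x)^2+(y'\pm y)^2}$ into an arctangent and pairing it with $\pt_{y'}$ of the Poisson kernel bound.)
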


\begin{proof}

 Consider the Green function in the half-plane, given by 
  $$G(Z',Z)=\frac{1}{2\pi}\left(\ln|Z'-\tilde Z|-\ln|Z'-Z|\right),$$
   where if $Z=(x,y)\in \bR^2_+$, then $\tilde Z=(x,-y)$.
   Define, 
   $$f(Z):= \pt_x \phi(Z)g(y),$$ 
     and 
   \begin{equation*}
q(Z):=\int_{\bR^2_+} G(Z', Z) f(Z') \ud Z'.   
\end{equation*}
  We will show that $q$ is well-defined and satisfies estimates \eqref{qestimates}-\eqref{qderestimatesylarge}.
In particular,  $q$ is  a  smooth solution of \eqref{qequation}.  The uniqueness is a consequence of the uniqueness of the bounded solution to  \eqref{qequation}.

Since for $Z,\,Z'\in\bR^2_+$, 
 $$|Z'-\tilde Z|\leq |Z'- Z|+|Z-\tilde Z|=|Z'- Z| +2y,$$ 
 we have that 
\begin{equation}\label{Greeninfinity0}
 0\leq \ln|Z'-\tilde Z|-\ln|Z'-Z|\leq \ln( |Z'- Z|+2y)-\ln (|Z'- Z|)=\ln\left(1+\frac{2y}{|Z'-Z|}\right).
 \end{equation}
 Moreover, by \eqref{eq:asymptotics for phi_x}, 
\begin{equation}\label{asymptioticphi_xg}
0\leq f(Z')\leq \frac{C(y'+1)}{x'^2+(y'+1)^2}\chi_{[0,R]}(y')\leq \frac{C}{y'+1}\chi_{[0,R]}(y'). 
\end{equation}
Since both $f$ and $G$ are nonnegative, nonzero functions, $q$ is positive. 
Let us show the upper bound for $q$ in \eqref{qestimates}.  In view of  \eqref{qderestimatesylarge}, we may assume that $y< 2R$.
 We write 
  \begin{equation}\label{qsplitlemmaqest}
\begin{aligned}
q(Z)& =\bbs{ \int_{\bR^2_+\cap\{|Z'-Z|<1\}} + \int_{\bR^2_+\cap\left\{|Z'-Z|>1\right\}}} 
G(Z', Z) f(Z') \ud Z'=:I_1+I_2.\\
\end{aligned}
\end{equation}
By  \eqref{Greeninfinity0}, and using that $\ln|Z'-Z|$ is integrable in $\{|Z'-Z|<1\}$,    we have
\begin{align*}
I_1&\leq \frac{1}{2\pi}\int_{\bR^2_+\cap \{|Z'-Z|<1\}} ( \ln (1+2y)-\ln|Z'-Z|)f(Z')\ud Z'\\&
\leq  \frac{\ln (1+2y)}{2\pi} \int_{\bR^2_+\cap \{|Z'-Z|<1\}} f(Z')\ud Z'+C. 
\end{align*}
Since $y<2R$, we get
\begin{align}\label{I1est}
I_1\leq C\ln R.
\end{align} 
Next, 
 by \eqref{Greeninfinity0}   and the fact that $g$ has support in $[0,R]$, we have 
 \begin{align}\label{I2est}
I_2&\leq C\ln(1+2y) \int_0^R\ud y'\int_{\R}\partial_x\phi(x',y')\,\ud x'\leq CR\ln(1+2y)\leq CR\ln R, 
\end{align} 
where we used again that  $y<2R$, and that  $\phi(\infty,y')=1$,  $\phi(-\infty,y')=0$.
 

From \eqref{qsplitlemmaqest}, \eqref{I1est} and \eqref{I2est}, we obtain   estimate  \eqref{qestimates}.

Next, we compute
\begin{equation}\label{Greenderesti}\partial_y G(Z',Z)=\frac{1}{2\pi}\left(\frac{y'+y}{|Z'-\tilde Z|^2}+\frac{y'-y}{|Z'- Z|^2}\right),
\quad \partial_x G(Z',Z)=\frac{1}{2\pi}\left(\frac{x-x'}{|Z'-\tilde Z|^2}-\frac{x-x'}{|Z'- Z|^2}\right). 
\end{equation}
 Therefore, using \eqref{asymptioticphi_xg}, we get
 \begin{align*}
 |\partial_y q(Z)&| =\left|\int_{\bR^2_+}\partial_yG(Z',Z)f(Z')\,\ud Z'\right| \\&
 \leq C\int_{\bR^2_+\cap\{0<y'<R\}} \left(\frac{y'+y}{(x'-x)^2+(y'+y)^2}+\frac{|y'-y|}{(x'-x)^2+(y'-y)^2}\right)\frac{1}{y'+1} \ud x'\ud y'\\&
 =C\int_0^R\ud y'\,\frac{1}{y'+1}\int_{\R} \left(\frac{y'+y}{(x'-x)^2+(y'+y)^2}+\frac{|y'-y|}{(x'-x)^2+(y'-y)^2}\right)\ud x'\\&
 =C\int_0^R\frac{1}{y'+1}\left(\arctan\left(\frac{x'-x}{y'+y}\right)\bigg\vert_{x'=-\infty}^{x'=\infty}+\arctan\left(\frac{x'-x}{|y'-y|}\right)\bigg\vert_{x'=-\infty}^{x'=\infty}\right)\,\ud y'\\&
 =C\int_0^R\frac{1}{y'+1}\,\ud y'\leq C\ln R,
  \end{align*}
 which proves \eqref{qderestimates} for $\partial_y q$.

 To estimate  $\partial_x q$, using \eqref{asymptioticphi_xg} and performing an integration by parts we get, 
 \begin{align*}
 |\partial_x q(Z)|& =\left|\int_{\bR^2_+}\partial_xG(Z',Z)f(Z')\,\ud Z'\right| \\&
 \leq C\int_{\bR^2_+\cap\{0<y'<R\}} \left(\frac{|x'-x|}{(x'-x)^2+(y'+y)^2}+\frac{|x'-x|}{(x'-x)^2+(y'-y)^2}\right)\frac{y'+1}{x'^2+(y'+1)^2}\,\ud x'\ud y'\\&
 = C\int_{\bR}\ud x'\int_{0}^R \left(\frac{|x'-x|}{(x'-x)^2+(y'+y)^2}+\frac{|x'-x|}{(x'-x)^2+(y'-y)^2}\right)\frac{y'+1}{x'^2+(y'+1)^2}\,\ud y'\\&
 = - C\int_{\bR}\ud x'\int_{0}^R \left(\arctan\left(\frac{y'+y}{|x'-x|}\right)+\arctan\left(\frac{y'-y}{|x'-x|}\right)\right)\partial_{y'}\left(\frac{y'+1}{x'^2+(y'+1)^2}\right)\,\ud y'\\&
 +C\int_{\bR}\left(\arctan\left(\frac{y'+y}{|x'-x|}\right)+\arctan\left(\frac{y'-y}{|x'-x|}\right)\right)\frac{y'+1}{x'^2+(y'+1)^2}\bigg\vert_{y'=0}^{y'=R}\,\ud x'\\&
 \leq C\int_{0}^R \ud y' \int_{\bR}\frac{1}{x'^2+(y'+1)^2}\,\ud x'+C\int_{\bR}\left(\frac{R+1}{x'^2+(R+1)^2}+\frac{1}{x'^2+1}\right)\ud x'\\&
 =C\int_{0}^R \frac{1}{y'+1}\arctan\left(\frac{x'}{y'+1}\right)\bigg\vert_{x'=-\infty}^{x'=\infty}\ud y'+
 C\left(\arctan\left(\frac{x'}{R+1}\right) +\arctan x'\right)\bigg\vert_{x'=-\infty}^{x'=\infty}\\&
 =C\int_{0}^R \frac{\ud y'}{1+y'}
 +C\leq C\ln R.
   \end{align*}
 Estimate \eqref{qderestimates} for $\partial_x q$ is then proven.

Finally, assume $y>2R$. Then, for $0<y'<R$, we have that 
\begin{equation}\label{qlemmaZ-Z'ylarge}
|Z-Z'|\geq y-y'\geq \frac{y}{2},
\end{equation} 
from which
\begin{align*}
G(Z',Z)&=\frac{1}{4\pi}\left[\ln((x'-x)^2+(y'+y)^2)-\ln((x'-x)^2+(y'-y)^2)\right]=\frac{1}{4\pi}\int_{(y'-y)^2}^{(y'+y)^2}\frac{\ud\tau}{(x'-x)^2+\tau}\\&
\leq \frac{1}{\pi}\frac{y'y}{(x'-x)^2+(y'-y)^2}  =\frac{1}{\pi}\frac{y'y}{|Z'-Z|^2}\leq \frac{1}{\pi}\frac{Ry}{|Z'-Z|^2}\leq  \frac{4R}{\pi y}.
\end{align*}
Therefore,  we get
\begin{align*}
q(Z)\leq \frac{CR}{ y}\int_0^R\ud y'\int_{\bR}\partial_x\phi(x',y')\ud x'\leq \frac{C R^2}{y},
\end{align*}
where we used again that  $\phi(\infty,y')=1$,  $\phi(-\infty,y')=0$. This gives \eqref{qderestimatesylarge} for $q$. 

Next,  recalling \eqref{Greenderesti}, by  \eqref{qlemmaZ-Z'ylarge}, for $y>2R$ and $0<y'<R$, 
$$| \partial_y G(Z',Z)|\leq\frac{1}{\pi|Z'-Z|}\leq \frac{2}{\pi y}.$$ 
Therefore, 
\begin{align*}| \partial_y q(Z)|\leq \frac{C }{y}\int_0^R\ud y'\int_{\bR}\partial_x\phi(x',y')\ud x'\leq \frac{C R}{y},
\end{align*}
which proves \eqref{qderestimatesylarge} for $\pt_yq$. The estimate for  $\pt_xq$ follows similarly. 
The proof of the lemma is then completed.

\end{proof}

\section{Construction of  super- and subsolutions to \eqref{maineq}  with \eqref{initialcondition}}\label{sec3}
 In this section, we construct  a supersolution $w_\eps$ and a subsolution $h_\eps$ to \eqref{maineq} with initial condition \eqref{initialcondition} based on   multilayers of transition profiles and appropriate correctors,  whose centers solve slightly perturbed ODE systems. In Propositions \ref{supersolutionprop} and \ref{initialconditionprop}, we will prove $w_\eps(x,y,t)$ is a supersolution to \eqref{maineq}  with initial datum \eqref{initialcondition}. The subsolution result will be summarized  in Proposition \ref{subsolutionprop}.

Consider the perturbed ODE system, 
for $i=1,\ldots, N$, $\delta>0$, 
\begin{equation} \label{PNperturbed} 
   \left\{ \begin{aligned}
     &\frac{d\bar z_i}{dt} = \frac{c_0}{\pi} \left(\sum_{ j \neq i}\frac{1}{\bar z_i - \bar z_j}-\delta\right),
     && t>0; \\
     &\bar z_i(0) = z_i^0-\delta, 
   \end{aligned} \right.
\end{equation}
and let 
\begin{equation}\label{c(t)def}
\bar c_i(t):=\dot {\bar z}_i(t), \quad \tilde\delta= \frac{\delta}{\alpha},
\end{equation}
with $\alpha$ defined in \eqref{eq:c0-gamma}. Define
\begin{equation}\label{vep}
v_\eps(x,y,t):= \sum_{i=1}^N \left[ \phi\bbs{ \frac{x-\bar z_i(t)}{\eps}, \frac{y}{\eps} } - \eps \bar c_i(t)\psi\bbs{ \frac{x-\bar z_i(t)}{\eps}, \frac{y}{\eps} }\right] + \eps \tilde{\delta}.
\end{equation}

\begin{lem}\cite[Proposition 4.3]{Mon1}\label{Monneaulem}
There exist $\eps_0,\,\delta_0>0$ such that, for any $\eps<\eps_0$,  if $(\bar z_1(t),\ldots,\bar z_N(t))$ is the solution of \eqref{PNperturbed} 
with $0<\delta<\delta_0$, then 
the function $v_\eps$ defined in \eqref{vep}
solves
\begin{equation*}\begin{cases}
-\Delta v_\eps=0,&y>0,\\
\eps \pt_t v_\eps -  \pt_y v_\eps + \frac{1}{\eps}W'(v_\eps) \geq\frac{\delta}{2},&y=0. 
\end{cases}
\end{equation*}
\end{lem}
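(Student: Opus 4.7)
The plan is to treat the two claims separately, and for the boundary inequality to reduce everything to the one-dimensional argument already carried out in \cite{Mon1}.

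For the bulk equation, there is essentially nothing to do: by \eqref{eq:standing wave} and \eqref{psi}, both $\phi$ and $\psi$ are harmonic in $\R^2_+$, and harmonicity is preserved under $(x,y)\mapsto\left(\frac{x-\bar z_i(t)}{\eps},\frac{y}{\eps}\right)$ (the map is an isotropic scaling followed by a horizontal translation, with $t$ entering only through the translation). Thus each summand in \eqref{vep} has zero Laplacian for $y>0$, and the additive constant $\eps\tilde\delta$ contributes nothing, so $-\Delta v_\eps=0$ in $\R^2_+$.

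For the boundary inequality I would identify the trace $v_\eps(\cdot,0,t)$ with the exact ansatz used by \textsc{Gonzalez--Monneau} for the one-dimensional fractional Allen--Cahn equation. Since $v_\eps$ is harmonic in the upper half-plane, the Dirichlet-to-Neumann characterization of the half-Laplacian \cite{Caffarelli_Silvestre_2007} gives
\begin{equation*}
-\pt_y v_\eps(x,0,t)=(-\Delta)^{1/2}v_\eps(\cdot,0,t)(x),
\end{equation*}
so the quantity $\eps\pt_t v_\eps-\pt_y v_\eps+\frac{1}{\eps}W'(v_\eps)$ at $y=0$ coincides with the super-solution expression $\eps\pt_t v_\eps+(-\Delta)^{1/2}v_\eps+\frac{1}{\eps}W'(v_\eps)$ for \eqref{AC1d}, evaluated on exactly the ansatz that appears in \cite[Proposition 4.3]{Mon1}. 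The lemma therefore follows directly from Monneau's result.

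If one wanted to redo the argument from scratch, the plan would be: compute $\eps\pt_t$ and $\pt_y$ of $v_\eps|_{y=0}$ using the boundary conditions in \eqref{eq:standing wave} and \eqref{psi}, noting that the terms $-\bar c_i\pt_x\phi$ arising from $\eps\pt_t\phi_i$ cancel against the $\pt_x\phi$ term in $\pt_y\psi_i$; then Taylor expand $W'(v_\eps)$ around $\phi_i$ in a region near each $\bar z_i$, invoking periodicity of $W$ together with the asymptotic \eqref{eq:asymptotics for phi} to replace $\sum_{j\neq i}\phi_j$ at the boundary by an integer minus $\frac{\eps}{\alpha\pi}\sum_{j\neq i}(\bar z_i-\bar z_j)^{-1}$. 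The equation \eqref{psi} for $\psi$ is designed so that, once one writes $W''(\phi_i)\psi_i$ against the linear-in-$\bar c_i$ terms, the surviving contribution is $-\frac{\bar c_i}{c_0}$, which is cancelled exactly by the $\sum_{j\neq i}(\bar z_i-\bar z_j)^{-1}$ piece through the ODE \eqref{PNperturbed}; the purposeful $\delta$ perturbation in the ODE, together with the choice $\tilde\delta=\delta/\alpha$, leaves a strictly positive remainder $W''(\phi_i)\tilde\delta-\frac{\delta}{\alpha\pi}W''(\phi_i)+\frac{\delta}{\pi}$ up to $O(\eps)$, which is bounded below by $\delta/2$ for $\delta$ and $\eps$ small. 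The principal obstacle here, and the only nontrivial point, is controlling the error of the Taylor expansion uniformly in $x\in\R$, which requires combining the pointwise decay \eqref{eq:asymptotics for phi}, \eqref{psiinfinity} for $\phi$ and $\psi$ with the lower bound on the minimal particle distance from Lemma \ref{cdotlemma}; this is precisely what the proof in \cite{Mon1} carries out, so no new estimate is needed.
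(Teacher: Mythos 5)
Your proposal is correct, and it matches the paper's treatment: the paper invokes Lemma \ref{Monneaulem} purely as a citation to \cite[Proposition~4.3]{Mon1} and offers no proof of its own, so all you really need is to confirm that the cited result applies verbatim here. You do so correctly: the bulk equation is immediate from harmonicity of $\phi$ and $\psi$ (cf.\ \eqref{eq:standing wave} and \eqref{psi}) and invariance of harmonicity under dilation and horizontal translation, while the boundary inequality is precisely the supersolution property established in \cite{Mon1}. The only remark worth making is that your appeal to the Dirichlet-to-Neumann identification from \cite{Caffarelli_Silvestre_2007} is a harmless detour: since \cite{Mon1} already phrases the layer solution $\phi$, the corrector $\psi$, and the resulting supersolution property in terms of the harmonic extension to $\R^2_+$ with the Neumann trace $-\pt_y$ at $y=0$, the cited proposition is literally the boundary inequality of the lemma and no rewriting through $(-\Delta)^{1/2}$ is required. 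Your sketch of the from-scratch argument, including the cancellation of the $-\bar c_i\pt_x\phi_i$ term from $\eps\pt_t\phi_i$ against the $+\bar c_i\pt_x\phi_i$ term arising from $-\pt_y(-\eps\bar c_i\psi_i)$ via the boundary condition in \eqref{psi}, and the role of the $\delta$-perturbation in \eqref{PNperturbed} together with $\tilde\delta=\delta/\alpha$ in producing the strictly positive lower bound, is an accurate description of what \cite{Mon1} does.
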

We now introduce a new function $w_\eps$ which is obtained by adding a correction to the function $v_\eps$.
More precisely, let   $q$ be given by Lemma \ref{qlemma} where we  choose $R=2\eps^{-b}$, with $0<b<1$, 
and 
 $g$ to be a smooth nonnegative cut-off function such that
 \begin{equation}\label{cutoff}
  g(y)
=\left\{
\begin{array}{lll}
1, & \text{ for } 0\leq y\leq \frac{R}{2}=\eps^{-b};\\
0, & \text { for } y\geq  R=2\eps^{-b}.
\end{array} 
\right.   
 \end{equation}
For $\tau>0,\,\theta>0$ and $0<\gamma<1$,  define
\begin{equation}\label{w_ep}
w_\eps(x,y,t):=v_\eps(x,y,t)+\eps^{a+1} \sum_{i=1}^N\bar  c_i(t)q\bbs{ \frac{x-\bar z_i(t)}{\eps} , \frac{y}{\eps}}+\eps^\theta(y+\eps)^\gamma+\eps^{1+\tau} t.
\end{equation}

\begin{prop}\label{supersolutionprop}
Given $T>0$   and $a>0$, there exist $\eps_0,\,\delta_0,\, \tau,\,\theta>0$ and $0<b,\,\gamma<1$ such that, for any $ 0<\eps<\eps_0$,  if $(\bar z_1(t),\ldots,\bar z_N(t))$ is the solution of \eqref{PNperturbed} 
with $0<\delta<\delta_0$, then 
the function $w_\eps$ defined in \eqref{w_ep} solves
\begin{equation}\begin{cases}\label{supersolutionequations}
\eps^a\pt_t w_\eps -\Delta w_\eps\geq 0,&y>0,\, t\in (0,T);\\
\eps \pt_t w_\eps -  \pt_y w_\eps + \frac{1}{\eps}W'(w_\eps) \geq0,&y=0,\, t\in (0,T). 
\end{cases}
\end{equation}
\end{prop}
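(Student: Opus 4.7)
The plan is to verify the bulk and boundary inequalities in \eqref{supersolutionequations} separately, treating $w_\eps-v_\eps$ as a controlled perturbation of $v_\eps$. Lemma \ref{Monneaulem} already handles $v_\eps$: it is harmonic in the bulk and has a uniform slack $\delta/2$ in the interface equation. The task is therefore to estimate the additional contributions coming from the $q$-correction, the $\eps^\theta(y+\eps)^\gamma$-term and the $\eps^{1+\tau}t$-term, and to choose the free parameters $b,\gamma,\theta,\tau$ so that these corrections neither destroy the bulk harmonicity nor the boundary slack. Note that the ODE bounds on $\bar c_i$ and $\ddot{\bar z}_i$ needed throughout follow from \eqref{PNperturbed} and Lemma \ref{cdotlemma}, which prevents collisions up to time $T$.

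For the bulk inequality I would first expand $\eps^a\pt_t w_\eps-\Delta w_\eps$ using the harmonicity of $\phi$ and $\psi$. The time derivative of $v_\eps$ produces a single leading term $-\eps^a\sum_i\bar c_i\pt_x\phi\bbs{(x-\bar z_i)/\eps,y/\eps}$ together with lower-order $O(\eps^{a+1})$ terms involving $\psi,\pt_x\psi,\ddot{\bar z}_i$. The decisive cancellation is provided by the $q$-corrector: by \eqref{qequation}, the Laplacian of $\eps^{a+1}\bar c_iq\bbs{(x-\bar z_i)/\eps,y/\eps}$ contributes exactly $+\eps^a\bar c_i\pt_x\phi\cdot g(y/\eps)$, which annihilates the leading error on $\{y\leq\eps^{1-b}\}$ where $g\equiv 1$. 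On the complement $\{y>\eps^{1-b}\}$ the uncancelled residual is bounded pointwise by $C\eps^a/(y+\eps)$ using the decay $|\pt_x\phi(X,Y)|\leq C(Y+1)/(X^2+(Y+1)^2)$ from \eqref{eq:asymptotics for phi_x}. I would then dominate this residual by $-\Delta[\eps^\theta(y+\eps)^\gamma]=\eps^\theta\gamma(1-\gamma)(y+\eps)^{\gamma-2}$ on a range $y\lesssim\eps^{(\theta-a)/(1-\gamma)}$ and by $\eps^a\pt_t[\eps^{1+\tau}t]=\eps^{a+1+\tau}$ for still larger $y$. The secondary errors ($\psi$-contributions of size $O(\eps^{a+1})$ and $q$-time-derivative contributions of size $O(\eps^{2a}|\ln\eps|)$ near the interface and $O(\eps^{2a-b+1}/y)$ far from it, via Lemma \ref{qlemma}) are absorbed into the same two positive mechanisms.

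For the interface inequality I would start from the slack $\eps\pt_tv_\eps-\pt_yv_\eps+\eps^{-1}W'(v_\eps)\geq\delta/2$ of Lemma \ref{Monneaulem} and show that every additional piece of $w_\eps$ contributes only a vanishing correction at $y=0$. Since $q(\cdot,0)\equiv 0$ forces also $\pt_x q(\cdot,0)\equiv 0$, the $\eps\pt_t$ of the $q$-term vanishes on the boundary; the only trace it leaves is $-\pt_y[\eps^{a+1}\bar c_iq_i]|_{y=0}=O(\eps^a|\ln\eps|)$ by \eqref{qderestimates}. The $(y+\eps)^\gamma$-term contributes $-\gamma\eps^{\theta+\gamma-1}$ via $-\pt_y$, and the linearization $W'(w_\eps)-W'(v_\eps)=W''(v_\eps)(w_\eps-v_\eps)+O((w_\eps-v_\eps)^2)$, together with $w_\eps-v_\eps|_{y=0}=\eps^{\theta+\gamma}+\eps^{1+\tau}t$, produces $O(\eps^{\theta+\gamma-1})+O(\eps^\tau)$ after dividing by $\eps$. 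All three contributions are $o(1)$ provided $\theta+\gamma>1$ and $\tau>0$, hence are dominated by $\delta/2$ for $\eps$ small.

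The parameter constraints that emerge are $\theta+\gamma>1$, $\theta<a-(1+\tau)(1-\gamma)$, $\tau\in(0,1)$ small, and $b\in(0,1)$; they are compatible iff $(1-\gamma)(2+\tau)<a$, which is achievable for any $a>0$ by taking $\gamma$ close to $1$. The main obstacle is precisely the far-field bulk estimate: the natural weight $(y+\eps)^{\gamma-2}$ decays strictly faster than the residual $\eps^a/(y+\eps)$, so the constant-in-$y$ positive contribution $\eps^{a+1+\tau}$ from the $\eps^{1+\tau}t$-term is indispensable for closing the inequality at infinity, and matching the two positive mechanisms at an intermediate scale is what forces the tight relation $\theta<a-(1+\tau)(1-\gamma)$. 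This is also where the regime $a\leq 1$ becomes delicate: the uncancelled leading error is no longer small by itself, so both the $q$-corrector (to clean out the near-interface region) and the two positive weights (to handle the far field) are genuinely needed.
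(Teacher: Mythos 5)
Your proposal is correct, and its treatment of the bulk inequality is genuinely different from (and more streamlined than) the paper's. The boundary analysis matches the paper's exactly. For the bulk, the paper splits $\{y>0\}$ into four regimes: the near-interface region $\{y\leq\eps^{1-b}\}$ where $g\equiv1$ kills the leading term, a $k$-indexed ladder $\{\eps^{1-kb}\leq y\leq\eps^{1-(k+1)b}\}$ up to $y\sim 1$, a second $k$-indexed ladder $\{\eps^{-ka/2}\leq y\leq\eps^{-(k+1)a/2}\}$, and the far field $\{y\geq\eps^{-1-r}\}$; in each regime it derives a sufficient parameter condition (\eqref{coeffiestim:eq2}–\eqref{coeffiestim:eq6}) and then checks compatibility in the last paragraph, which forces $b<\min\{1,a\}$. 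You instead use the single pointwise bound $C\eps^a/(y+\eps)$ on the uncancelled residual and observe that its ratio to the dominant positive term $\max\{\eps^\theta\gamma(1-\gamma)(y+\eps)^{\gamma-2},\ \eps^{a+1+\tau}\}$ is increasing before the crossing scale $y_c\sim\eps^{-(a+1+\tau-\theta)/(2-\gamma)}$ and decreasing after it, so the whole bulk inequality reduces to the single condition $\theta<a-(1+\tau)(1-\gamma)$ at $y_c$ (together with $\theta+\gamma>1$ from the boundary), compatible iff $(2+\tau)(1-\gamma)<a$. This replaces the paper's multi-case bookkeeping with a one-line monotonicity argument, makes the sharp parameter constraint explicit, and incidentally dispenses with the paper's restriction $b<a$ for this proposition, keeping only $b\in(0,1)$ (note $b<a$ is still used in the paper's proof of Proposition \ref{initialconditionprop}, which is a separate matter). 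The secondary errors from $\psi$, $\pt_x\psi$, $q$, $\pt_x q$ are all at least one power of $\eps$ (or a factor $\eps^{1-b}$) smaller than the main residual or the positive weights in the relevant regimes, so they are absorbed by the same two mechanisms, exactly as you indicate.
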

\begin{proof}
For convenience, we use the following notation throughout the proof:
$$h_i:=h\bbs{ \frac{x-\bar z_i(t)}{\eps}, \frac{y}{\eps} }$$
with $h=\phi,\,\partial_x\phi,\, \partial_y\phi, \psi,\,\partial_x\psi,\, \partial_y\psi,\, q,\,\partial_x q,\, \partial_y q$. 

Let $ 0<\eps<\eps_0$ and $ 0<\delta<\delta_0$ with $\eps_0$ and $\delta_0$ given by Lemma \ref{Monneaulem}. 
Note that, by Lemma \ref{cdotlemma}, for $\delta_0$ small enough, we have that $\bar c_i$ and $\dot{\bar c}_i$ are bounded in $[0,T]$. 

At  $y=0$,  recalling that $q(x,0)=0$, we also have $\partial_x q(x,0)=0$,  therefore, 
\begin{align*}
\eps \pt_t w_\eps -  \pt_y w_\eps + \frac{1}{\eps}W'(w_\eps) =&\eps \pt_t v_\eps +\eps^{2+\tau}
- \pt_y v_\eps -\eps^{a}\sum_{i=1}^N \bar{c}_i\partial_y q_i-\gamma\eps^{\theta+\gamma-1}
+\frac{1}{\eps}W'(v_\eps+\eps^{\theta+\gamma}+\eps^{1+\tau}t)\\
=&\eps \pt_t v_\eps -  \pt_y v_\eps + \frac{1}{\eps}W'(v_\eps)-\eps^{a}\sum_{i=1}^N \bar{c}_i\partial_y q_i+O(\eps^{\theta+\gamma-1})+O(\eps^\tau T). 
\end{align*}
By Lemma \ref{Monneaulem} and   \eqref{qderestimates}  with $R=2\eps^{-b}$, and by eventually making $\eps_0$ smaller, we get, for $\eps<\eps_0$, 
\begin{align*}
\eps \pt_t w_\eps -  \pt_y w_\eps + \frac{1}{\eps}W'(w_\eps)\geq \frac{\delta}{2}+O(\eps^{a}|\ln \eps|)+ O(\eps^{\theta+\gamma-1})+O(\eps^\tau T)\geq 0,
\end{align*}
provided
\begin{equation}\label{coeffiestim:eq1}
\theta+\gamma>1. 
\end{equation}


For $y>0$, using that $\phi$, $\psi$ and $q$  satisfy respectively \eqref{eq:standing wave}, \eqref{psi} and \eqref{qequation}, we get

\begin{equation}\label{eq_bulk1}
\begin{aligned}
\eps^a\pt_t w_\eps -  \Delta w_\eps= &-\eps^{a-1}\sum_{i=1}^N\bar c_i(t)\pt_x \phi_i+\eps^a \sum_{i=1}^N{\bar c}^2_i(t)\pt_x \psi_i-
\eps^{a+1}\sum_{i=1}^N \dot{\bar c}_i(t)\psi_i\\&
-\eps^{2a} \sum_{i=1}^N\bar{c}^2_i(t) \pt_x q_i+\eps^{2a+1}\sum_{i=1}^N\dot{\bar c}_i(t) q_i
 +\eps^{a-1}g\sum_{i=1}^N\bar c_i(t)\pt_x \phi_i \\&
 +   \eps^{a+1+\tau}+\gamma(1-\gamma)\eps^\theta(y+\eps)^{\gamma-2}.
 \end{aligned}
\end{equation}

The proof of \eqref{supersolutionequations} for $y>0$ is broken  into four cases.

\medskip

\noindent{\em Case 1: $0<y\leq \frac{\eps  R}{2}=\eps^{1-b}$.}

In this first case, by \eqref{cutoff}  $g=g(\frac{y}{\eps})=1$, so that \eqref{eq_bulk1} reads
\begin{align*}
\eps^a\pt_t w_\eps -  \Delta w_\eps= &\eps^a \sum_{i=1}^N{\bar c}^2_i(t)\pt_x \psi_i-
\eps^{a+1}\sum_{i=1}^N \dot{\bar c}_i(t)\psi_i
-\eps^{2a} \sum_{i=1}^N\bar{c}^2_i(t) \pt_x q_i\\&+\eps^{2a+1}\sum_{i=1}^N\dot{\bar c}_i(t) q_i
 +   \eps^{1+a+\tau}+\gamma(1-\gamma)\eps^\theta(y+\eps)^{\gamma-2}.
 \end{align*}
By \eqref{qestimates} and \eqref{qderestimates} with $R=2\eps^{-b}$, we have 
\begin{equation}\label{qestimatessuperprop}
0\leq q_i\leq C\eps^{-b}|\ln\eps|\quad\text{and}\quad |\partial_xq_i|\leq C|\ln\eps|,
\end{equation}
from which, recalling that $b<1$, 
\begin{align*}
\eps^a\pt_t w_\eps -  \Delta w_\eps&\geq O(\eps^a)+O(\eps^{2a}|\ln\eps|)+O(\eps^{2a+1-b}|\ln\eps|)+\gamma(1-\gamma)\eps^\theta(y+\eps)^{\gamma-2}\\&
\geq O(\eps^a)+\gamma(1-\gamma)\eps^\theta\geq 0, 
 \end{align*}
for $\eps$ small enough, provided 
\begin{equation}\label{coeffiestim:eq2}
0<\theta <a.  
\end{equation}

\medskip

  Next, for any given $b<1$, let $k_0$ be the first  integer such that 

\begin{equation}\label{k_0}
1-(k_0+1)b\leq 0.
\end{equation}
Notice that $k_0\geq 1$. 

\noindent{\em Case 2: $\eps^{1-kb}\leq y\leq\eps^{1-(k+1)b}$, $k=1,\ldots, k_0$. }

By \eqref{eq:asymptotics for phi_x} and \eqref{psiinfinityy>0}, 
$$0\leq \partial_x \phi_i,\, |\psi_i|,\,|\pt_x\psi_i|\leq \frac{C\eps}{y}\leq C\eps^{kb}.$$
Combining these estimates with \eqref{qestimatessuperprop},   and using that, since  $1-kb>0$, 
$$2a>a-(1-kb)\quad\text{and}\quad 2a+1-b>a-(1-kb),$$
\eqref{eq_bulk1} can be estimated as
\begin{align*}
\eps^a\pt_t w_\eps -  \Delta w_\eps&\geq O(\eps^{a+kb-1})+ O(\eps^{2a}|\ln\eps|)+O(\eps^{2a+1-b}|\ln\eps|)
+\gamma(1-\gamma)\eps^\theta(y+\eps)^{\gamma-2}\\&
=O(\eps^{a+kb-1})
+\gamma(1-\gamma)\eps^\theta(y+\eps)^{\gamma-2}\\&
\geq  O(\eps^{a+kb-1}) +C\eps^{\theta-(2-\gamma)[1-(k+1)b]}\geq0
 \end{align*}
for $\eps$ small enough, provided 
\begin{equation}\label{coeffiestim:eq3}
a+kb-1>\theta-(2-\gamma)[1-(k+1)b]. 
\end{equation}

\medskip

 Next, let $k_1$ be the first  integer such that 
\begin{equation}\label{k_1} \frac{k_1+1}{2}a>1.\end{equation}
Notice that $k_1\geq 0$.

\noindent{\em Case 3:   $\eps^{-\frac{k}{2}a}\leq y\leq \eps^{-\frac{k+1}{2}a}$, $k=0,\ldots, k_1$. }

In this case, by \eqref{eq:asymptotics for phi_x} and \eqref{psiinfinityy>0}, 
$$0\leq \partial_x \phi_i,\, |\psi_i|,\,|\pt_x\psi_i|\leq \frac{C\eps}{y}\leq  C\eps^{\frac{k}{2}a+1}, $$
and by \eqref{qderestimatesylarge} with $R=2\eps^{-b}$,  
$$|q_i|\leq C\frac{R^2\eps}{y}\leq C\eps^{\frac{k}{2}a+1-2b},\quad|\partial_x q_i|\leq  C\frac{R\eps}{y}\leq C\eps^{\frac{k}{2}a+1-b}.$$
Therefore, by \eqref{eq_bulk1}, and using that $b<1$, we have 
\begin{align*}
\eps^a\pt_t w_\eps -  \Delta w_\eps&\geq O\left(\eps^{\left(1+\frac{k}{2}\right)a}\right)+ O\left(\eps^{\left(2+\frac{k}{2}\right)a+1-b}\right)
+O\left(\eps^{\left(2+\frac{k}{2}\right)a+2(1-b)}\right)
+\gamma(1-\gamma)\eps^\theta(y+\eps)^{\gamma-2}\\&
=O\left(\eps^{\left(1+\frac{k}{2}\right)a}\right)+\gamma(1-\gamma)\eps^\theta(y+\eps)^{\gamma-2}\\&
\geq O\left(\eps^{\left(1+\frac{k}{2}\right)a}\right)
+C\eps^{\theta+(2-\gamma)\frac{k+1}{2}a}\geq0,
 \end{align*}
for $\eps$ small enough,  provided 
\begin{equation}\label{coeffiestim:eq4}
\left(1+\frac{k}{2}\right)a>\theta +(2-\gamma)\frac{k+1}{2}a. 
\end{equation}

\medskip

Next, by \eqref{k_1} there exists $r>0$ so  small that 
\begin{equation}\label{rk_1} \frac{k_1+1}{2}a>1+r.
\end{equation}
Then, we are left with  the following last case.

\medskip

\noindent{\em Case 4: $y\geq \eps^{-1-r}$.}

In this case, by \eqref{eq:asymptotics for phi_x} and \eqref{psiinfinityy>0}, 
$$0\leq \partial_x \phi_i,\, |\psi_i|,\,|\pt_x\psi_i|\leq \frac{C\eps}{y}\leq C\eps^{2+r}, $$
and by \eqref{qderestimatesylarge}, 
$$|q_i|\leq    \frac{CR^2\eps}{y}\leq C\eps^{2+r-2b},\quad|\partial_x q_i|\leq    \frac{CR\eps}{y}\leq \eps^{2+r-b}.$$
Therefore, by \eqref{eq_bulk1}  and using that for $b<1$, 
$$2a-b+2+r,\,2(a-b)+3+r>a+1+r,$$ we obtain
\begin{align*}
\eps^a\pt_t w_\eps -  \Delta w_\eps&\geq  O(\eps^{a+1+r})+ O(\eps^{2a-b+2+r})+ O(\eps^{2(a-b)+3+r})+\eps^{a+1+\tau}\\&
= O(\eps^{a+1+r})+\eps^{a+1+\tau}\geq0,
 \end{align*}
for $\eps$ small enough, provided 
\begin{equation}\label{coeffiestim:eq6}
0<\tau<r. 
\end{equation}

 \medskip

Putting it all together, we first choose $b$ satisfying 
$$0<b<\min\{1,a\}.$$
Note that for $b<a$ and any integer $k$,  we have that 
$$(k+1)b-1 < a+kb-1.$$
Therefore,  since the quantity $\theta-(2-\gamma)[1-(k+1)b]$ is close to $(k+1)b-1$ when $\theta$ is close to 0 and $\gamma$ is close to 1, we can choose 
  $\theta$ sufficiently small and $\gamma$ sufficiently close to 1 so that
$$ 0<\theta<a,\quad 1-\theta<\gamma<1,$$ and 
condition \eqref{coeffiestim:eq3} is satisfied for $k=1,\ldots, k_0$ with $k_0$ as in \eqref{k_0}. 
Moreover, since the quantity $\theta +(2-\gamma)\frac{k+1}{2}a$ is close to $\frac{k+1}{2}a  < \left(1+\frac{k}{2}\right)a$, by eventually choosing $\theta$ smaller and $\gamma$ closer to 1, condition \eqref{coeffiestim:eq4} holds true for $k=0,\ldots,k_1$ with $k_1$ as in \eqref{k_1}. 
Finally, we choose $r>0$  satisfying \eqref{rk_1}  and $0<\tau<r$. 

With this choice of the coefficients, conditions \eqref{coeffiestim:eq1}, \eqref{coeffiestim:eq2}, \eqref{coeffiestim:eq3},  \eqref{coeffiestim:eq4} and \eqref{coeffiestim:eq6} are satisfied and the above computations show that $w_\eps$ is solution to  
 \eqref{supersolutionequations}, as desired. This concludes the proof of the proposition. 
\end{proof}

  We next show that the function $w_\eps$ defined in \eqref{w_ep} is above   the initial condition \eqref{initialcondition} at initial time. 
 
 \begin{prop}\label{initialconditionprop}
There exist $\eps_0,\,\delta_0>0$ such that for any $0<\eps<\eps_0$, if $(\bar z_1(0),\ldots,\bar z_N(0))$ satisfies \eqref{PNperturbed} with $0<\delta<\delta_0$,   and $b$ is as in Proposition \ref{supersolutionprop},  then  the function  $w_\eps$ defined in \eqref{w_ep} satisfies
\begin{equation}\label{initialconditionwep}
w_\eps(x,y,0)\geq  u_\eps^0(x,y)\quad\text{for all }(x,y)\in \overline{\R^2_+}, 
\end{equation}
with $u_\eps^0$ defined in \eqref{initialcondition}. 
\end{prop}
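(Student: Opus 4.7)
My plan is to expand the difference $w_\eps(x,y,0) - u_\eps^0(x,y)$ into explicit components and to exploit the monotonicity of $\phi$ in the first variable, together with the strictly positive barriers $\eps\tilde\delta$ and $\eps^\theta(y+\eps)^\gamma$, to absorb the two corrector contributions. Since $\bar z_i(0) = z_i^0 - \delta$, the difference reads
\begin{equation*}
w_\eps(x,y,0) - u_\eps^0(x,y) = S_\phi + \eps\tilde\delta + \eps^\theta(y+\eps)^\gamma - \eps R_\psi + \eps^{a+1} R_q,
\end{equation*}
where
\begin{equation*}
S_\phi := \sum_{i=1}^N \left[\phi\bigl(\tfrac{x-z_i^0+\delta}{\eps},\tfrac{y}{\eps}\bigr) - \phi\bigl(\tfrac{x-z_i^0}{\eps},\tfrac{y}{\eps}\bigr)\right] \geq 0
\end{equation*}
by $\pt_x\phi > 0$, and $R_\psi$, $R_q$ are the analogous $\psi$- and $q$-sums evaluated at the shifted arguments $(\tfrac{x-z_i^0+\delta}{\eps},\tfrac{y}{\eps})$. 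It then suffices to show that the three non-negative quantities dominate the two signed corrections for $\eps$ small.

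The $q$-correction is harmless: with $R = 2\eps^{-b}$ and $b\in(0,1)$ as in Proposition \ref{supersolutionprop}, Lemma \ref{qlemma} gives $|\eps^{a+1} R_q| \leq CN\eps^{a+1-b}|\ln\eps| = o(\eps)$ since $a>0$ and $b<1$, and this is absorbed by $\eps\tilde\delta$ for $\eps_0$ small. The $\psi$-correction is the real obstacle, because $\|\psi\|_\infty$ from Lemma \ref{l:psi} is only a constant of order one while $\tilde\delta=\delta/\alpha$ is itself taken small, so a crude $L^\infty$ bound on $\eps R_\psi$ cannot be absorbed by $\eps\tilde\delta$ alone. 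Instead one must play the decay of $\psi$ against the strict monotonicity of $\phi$ in a region-dependent manner.

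Concretely, I would fix a large threshold $M = M(\tilde\delta,\max_i|\bar c_i(0)|)$ and split $\overline{\R^2_+}$ into a \emph{far region} $\{(x,y):|x-z_i^0+\delta|\geq M\eps\text{ for every }i\}$ and its complementary \emph{near region}. In the far region, the decay of $\psi$ at infinity established in Lemma \ref{l:psi} yields $|\psi(\tfrac{x-z_i^0+\delta}{\eps},\tfrac{y}{\eps})|\leq C/M$ uniformly in $y$, so $|\eps R_\psi|\leq \eps\tilde\delta/2$ for $M$ large, and the inequality follows from $S_\phi\geq 0$. In the near region there is some $i_0$ with $|x-z_{i_0}^0+\delta|\leq M\eps$; for $\eps$ so small that $\delta/\eps>2M$, the argument $(x-z_{i_0}^0)/\eps$ is $\leq -\delta/(2\eps)$, and, provided $y/\eps$ stays bounded, $\phi(\tfrac{x-z_{i_0}^0}{\eps},\tfrac{y}{\eps})$ is $o(1)$ by the Poisson representation of $\phi$ combined with \eqref{eq:asymptotics for phi}, whereas $\phi(\tfrac{x-z_{i_0}^0+\delta}{\eps},\tfrac{y}{\eps})$ stays bounded away from $0$. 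This produces an order-one lower bound on the $i_0$-term of $S_\phi$, easily absorbing $\eps R_\psi=O(\eps)$.

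The main technical obstacle is uniformity in $y$: for $y/\eps$ large, the two $\phi$-values at the $i_0$-index both flatten towards $1/2$, and the order-one lower bound on $S_\phi$ degrades. I would close this regime by using the barrier $\eps^\theta(y+\eps)^\gamma$, which becomes non-negligible once $y\gtrsim \eps^{(1-\theta)/\gamma}$ (and $(1-\theta)/\gamma<1$ thanks to $\theta+\gamma>1$ from \eqref{coeffiestim:eq1}), combined with the complementary $y$-decay $|\psi|\leq C/y$ from \eqref{psiinfinityy>0}. The bookkeeping required to patch the estimates across the regimes ``$y$ small vs.~large'' and ``$x$ near vs.~far'' into a single pointwise inequality is the most delicate part of the argument.
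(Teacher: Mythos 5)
Your high-level strategy matches the paper's: monotonicity of $\phi$ in the first variable gives $\phi\bigl(\tfrac{x-\bar z_i(0)}{\eps},\tfrac{y}{\eps}\bigr)\geq\phi\bigl(\tfrac{x-z_i^0}{\eps},\tfrac{y}{\eps}\bigr)$; the $q$-term is $O(\eps^{a+1-b}|\ln\eps|)=o(\eps)$ and is absorbed by half of $\eps\tilde\delta$ (the correct reason is $b<a$ from the choice $0<b<\min\{1,a\}$ in Proposition~\ref{supersolutionprop}, not merely ``$a>0$, $b<1$''); and the case split into a far region $\{|x-\bar z_i(0)|>K\eps\ \forall i\}$, handled via the $x$-decay of $\psi$ from \eqref{psiinfinityx>1}, versus a near region $\{|x-\bar z_{i_0}(0)|\leq K\eps$ for some $i_0\}$, is exactly the paper's Cases~2 and~1.

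Where your plan has a genuine gap is the near region, in the $y$-variable. You only establish the lower bound on the $i_0$-term of $S_\phi$ for $y/\eps$ bounded (where $\phi(-\delta/(2\eps),y/\eps)=o(1)$ while $\phi(-K,y/\eps)$ stays away from $0$), and then propose to cover large $y$ with the barrier $\eps^\theta(y+\eps)^\gamma$. But with the constraint $\theta+\gamma>1$ from \eqref{coeffiestim:eq1}, that barrier only exceeds $C\eps$ when $y\gtrsim\eps^{(1-\theta)/\gamma}$, and only exceeds $C\eps^2/y$ when $y\gtrsim\eps^{(2-\theta)/(1+\gamma)}$; both exponents are strictly less than $1$, so both thresholds are $\gg\eps$. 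Thus the intermediate band $K\eps\ll y\ll\eps^{(2-\theta)/(1+\gamma)}$ is covered by neither of your two arguments. In fact, in the paper's proof the $\eps^\theta(y+\eps)^\gamma$ term plays no role at all here; it is simply discarded as nonnegative.

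The piece you are missing is the quantitative, uniform-in-$y$ lower bound on the $\phi$-difference. Writing $\phi(-K,y/\eps)-\phi(-\delta/(2\eps),y/\eps)=\int_{-\delta/(2\eps)}^{-K}\pt_x\phi(\tau,y/\eps)\,d\tau$ and using the lower bound in \eqref{eq:asymptotics for phi_x}, one obtains
\begin{equation*}
\phi\Bigl(\tfrac{x-\bar z_{i_0}(0)}{\eps},\tfrac{y}{\eps}\Bigr)-\phi\Bigl(\tfrac{x-z^0_{i_0}}{\eps},\tfrac{y}{\eps}\Bigr)\ \geq\ C\,\delta\,\frac{y+\eps}{\delta^2+(y+\eps)^2}.
\end{equation*}
This is \emph{not} of order one for $y\gtrsim\eps$; it is of order $(y+\eps)/\delta$ for $y+\eps\lesssim\delta$ and of order $\delta/(y+\eps)$ for $y+\eps\gtrsim\delta$, but that is enough: comparing with $\eps|\bar c_{i_0}(0)\psi_{i_0}|$ bounded by $C\eps$ for $y\lesssim\eps$ (boundedness of $\psi$) and by $C\eps^2/y$ for $y\gtrsim\eps$ (the $y$-decay in \eqref{psiinfinityy>0}), one checks the $\phi$-difference dominates in every regime, for $\eps$ small and $\delta$ small but fixed. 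This closes the intermediate band cleanly, makes the barrier term superfluous in this proposition, and is the argument the paper actually runs.
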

\begin{proof}

First note that, by the monotonicity of $\phi$ with respect to $x$, for every $i=1,\ldots, N$, 
\begin{equation}\label{phi-barphieuqpropintial}
\phi\bbs{ \frac{x-\bar z_{i}(0)}{\eps} , \frac{y}{\eps}}\geq \phi\bbs{ \frac{x-z^0_{i}}{\eps} , \frac{y}{\eps}}. 
\end{equation}
By  \eqref{psiinfinityx>1}, there exists $K>0$ such that 
\begin{equation}\label{Propinitialconpsifarcond}\sup_{|x|>K}|\psi(x,y)|\sum_{i=1}^N|\bar c_i(0)|\leq\frac{\tilde\delta}{2}. 
\end{equation}
Moreover, by \eqref{qestimates} with $R=2\eps^{-b}$,   and recalling that  $0<b<a$, for $\eps$ small enough, 
\begin{equation}\label{qestpropintial}0\leq \eps^{a+1} \sum_{i=1}^N|\bar  c_i(0)|q\bbs{ \frac{x-\bar z_i(0)}{\eps} , \frac{y}{\eps}}\leq C\eps^{a+1-b}|\ln\eps|\leq \frac{\eps \tilde\delta}{2}.
\end{equation}
Now, for  fixed $(x,y)\in  \overline{\R^2_+},$ we consider two cases. 

\medskip

\noindent{\em Case 1: there exists $i_0=1,\ldots,N$, such that $|x-\bar z_{i_0}(0)|\leq \eps K. $}

By the monotonicity of $\phi$ with respect to $x$, 
$$\phi\bbs{ \frac{x-\bar z_{i_0}(0)}{\eps} , \frac{y}{\eps}}\geq \phi\bbs{ -K,\frac{y}{\eps}},$$
while,  for $\eps$ small enough, 
$$\phi\bbs{ \frac{x-z^0_{i_0}}{\eps} , \frac{y}{\eps}}=\phi\bbs{ \frac{x-\bar z_{i_0}(0)-\delta}{\eps} , \frac{y}{\eps}}\leq \phi\bbs{ K-\frac{\delta}{\eps} , \frac{y}{\eps}} \leq 
\phi\bbs{-\frac{\delta}{2\eps} , \frac{y}{\eps}}.  $$
Therefore, from \eqref{eq:asymptotics for phi_x}, 
\begin{align*}
\phi\bbs{ \frac{x-\bar z_{i_0}(0)}{\eps} , \frac{y}{\eps}}-\phi\bbs{ \frac{x-z^0_{i_0}}{\eps} , \frac{y}{\eps}}&\geq \phi\bbs{ -K,\frac{y}{\eps}}-\phi\bbs{-\frac{\delta}{2\eps} , \frac{y}{\eps}}
=\int_{-\frac{\delta}{2\eps}}^{-K}\partial_x\phi\bbs{ \tau , \frac{y}{\eps}}\ud \tau\\&
\geq C \int_{-\frac{\delta}{2\eps}}^{-K}\frac{\frac{y}{\eps}+1}{\tau^2+\left(\frac{y}{\eps}+1\right)^2}\ud \tau\\&
\geq C\left(\frac{\delta}{2\eps}-K\right)\frac{\frac{y}{\eps}+1}{\left(\frac{\delta}{\eps}\right)^2+\left(\frac{y}{\eps}+1\right)^2}\\&
\geq C\frac{\delta}{\eps}\frac{\frac{y}{\eps}+1}{\left(\frac{\delta}{\eps}\right)^2+\left(\frac{y}{\eps}+1\right)^2}\\&
=  C\delta\frac{y+\eps}{\delta^2+(y+\eps)^2}.
\end{align*}
By \eqref{psiinfinityy>0}, for $\eps$ small enough, 
we infer that,

\begin{equation*}
\phi\bbs{ \frac{x-\bar z_{i_0}(0)}{\eps} , \frac{y}{\eps}}-\phi\bbs{ \frac{x-z^0_{i_0}}{\eps} , \frac{y}{\eps}}\geq  \eps \left| \bar c_{i_0}(0)\psi\bbs{ \frac{x-\bar z_{i_0}(0)}{\eps} , \frac{y}{\eps}}\right|.
\end{equation*}
Next, since $|x-\bar z_{i_0}(0)|<\eps K$, we have that $|x-\bar z_i(0)|>\eps K$  for $i\neq i_0$  and by \eqref{Propinitialconpsifarcond}, 
$$\sum_{i\neq i_0}  \left|\bar c_i(0)\psi\bbs{ \frac{x-\bar z_i(0)}{\eps}, \frac{y}{\eps} }\right|\leq\frac{\tilde\delta}{2}.$$ 
Combining the two last estimates with \eqref{phi-barphieuqpropintial} and \eqref{qestpropintial}, yields
\begin{align*}
w_\eps(x,y,0)=  &\sum_{i=1}^N \left[ \phi\bbs{ \frac{x-\bar z_i(0)}{\eps}, \frac{y}{\eps} } - \eps \bar c_i(0)\psi\bbs{ \frac{x-\bar z_i(0)}{\eps}, \frac{y}{\eps} }\right] + \eps \tilde{\delta}
\\&+\eps^{a+1} \sum_{i=1}^N\bar  c_i(0)q\bbs{ \frac{x-\bar z_i(0)}{\eps} , \frac{y}{\eps}}+\eps^\theta(y+\eps)^\gamma\\
\geq & \left[ \phi\bbs{ \frac{x-\bar z_{i_0}(0)}{\eps}, \frac{y}{\eps} } - \eps \bar c_{i_0}(0)\psi\bbs{ \frac{x-\bar z_{i_0}(0)}{\eps}, \frac{y}{\eps} }\right] 
+\sum_{i\neq i_0} \phi\bbs{ \frac{x-\bar z_i(0)}{\eps}, \frac{y}{\eps} } \\&- \eps\sum_{i\neq i_0}  \bar c_i(0)\psi\bbs{ \frac{x-\bar z_i(0)}{\eps}, \frac{y}{\eps} }
  +\eps^{a+1} \sum_{i=1}^N\bar  c_i(0)q\bbs{ \frac{x-\bar z_i(0)}{\eps} , \frac{y}{\eps}}+\eps \tilde{\delta}  \\
 \geq &  \phi\bbs{ \frac{x-z^0_{i_0}}{\eps} , \frac{y}{\eps}} +\sum_{i\neq i_0} \phi\bbs{ \frac{x- z_i^0}{\eps}, \frac{y}{\eps} } -\frac{\eps\tilde\delta}{2}-\frac{\eps\tilde\delta}{2}+\eps\tilde\delta\\
= &\sum_{i=1}^N \phi\bbs{ \frac{x- z_i^0}{\eps}, \frac{y}{\eps} }=u_\eps^0(x,y),
\end{align*}
as desired.

\medskip

\noindent{\em Case 2:  $|x-\bar z_{i_0}(0)|> \eps K$, for all $i=1,\ldots,N$.  }

By  \eqref{Propinitialconpsifarcond},
$$ \sum_{i=1}^N  \left|\bar c_i(0)\psi\bbs{ \frac{x-\bar z_i(0)}{\eps}, \frac{y}{\eps} }\right|\leq\frac{\tilde\delta}{2},$$ 
which together with   \eqref{phi-barphieuqpropintial} and \eqref{qestpropintial} implies 
\begin{align*}
w_\eps(x,y,0)\geq&  \sum_{i=1}^N \phi\bbs{ \frac{x-\bar z_i(0)}{\eps}, \frac{y}{\eps} }- \eps\sum_{i=1}^N  \bar c_i(0)\psi\bbs{ \frac{x-\bar z_i(0)}{\eps}, \frac{y}{\eps} }\\&
  +\eps^{a+1} \sum_{i=1}^N\bar  c_i(0)q\bbs{ \frac{x-\bar z_i(0)}{\eps} , \frac{y}{\eps}}+\eps \tilde{\delta}\\
\geq &\sum_{i=1}^N \phi\bbs{ \frac{x- z_i^0}{\eps}, \frac{y}{\eps} }=u_\eps^0(x,y). 
\end{align*}

From Cases 1 and 2, we infer that \eqref{initialconditionwep} holds for every $(x,y)\in\overline{\R^2_+}$. 
This completes the proof of the proposition. 

\end{proof}

Subsolutions to \eqref{maineq}  with initial datum \eqref{initialcondition} are constructed in a manner similar to that of supersolutions.
Consider the perturbed system, 
for $i=1,\ldots, N$, $\delta>0$, 
\begin{equation} \label{PNperturbedsub} 
   \left\{ \begin{aligned}
     &\frac{d\underline z_i}{dt} = \frac{c_0}{\pi} \left(\sum_{ j \neq i}\frac{1}{\underline  z_i - \underline z_j}+\delta\right),
     && t>0, \\
     &\underline z_i(0) = z_i^0+\delta, 
   \end{aligned} \right.
\end{equation}
 
and let 
\begin{equation*}
\underline c_i(t):=\dot { \underline z}_i(t),
\end{equation*}
and $\tilde\delta$ be defined as in \eqref{c(t)def}. Then, one can prove that the function 
\begin{equation}\label{hepsub}
\begin{aligned}
h_\eps(x,y,t):=&\sum_{i=1}^N \left[ \phi\bbs{ \frac{x-\underline z_i(t)}{\eps}, \frac{y}{\eps} } - \eps \underline c_i(t)\psi\bbs{ \frac{x-\underline z_i(t)}{\eps}, \frac{y}{\eps} }\right] - \eps 
\tilde{\delta} +\eps^{a+1} \sum_{i=1}^N\underline c_i(t)q\bbs{ \frac{x-\underline  z_i(t)}{\eps} , \frac{y}{\eps}}\\&-\eps^\theta(y+\eps)^\gamma-\eps^{1+\tau} t,
\end{aligned}
\end{equation}
is subsolution to \eqref{maineq}
 with initial datum \eqref{initialcondition}. More precisely, we have  

\begin{prop}\label{subsolutionprop}
Given $T>0$  and $a>0$, there exist $\eps_0,\,\delta_0,\, \tau,\,\theta>0$ and $0<b,\,\gamma<1$ such that, for any $0<\eps<\eps_0$,  if $(\underline z_1(t),\ldots,\underline z_N(t))$ is the solution of \eqref{PNperturbedsub} 
with $0<\delta<\delta_0$, then 
the function $h_\eps$ defined in \eqref{hepsub} solves 
\begin{equation*}\begin{cases}
\eps^a\pt_t h_\eps -\Delta h_\eps\leq 0,&y>0,\, t\in(0,T),\\
\eps \pt_t h_\eps -  \pt_y h_\eps + \frac{1}{\eps}W'(h_\eps) \leq0,&y=0, t\in(0,T), 
\end{cases}
\end{equation*}
and $$h_\eps(x,y,0)\leq  u_\eps^0(x,y)\quad\text{for all }(x,y)\in \overline{\R^2_+}. $$
\end{prop}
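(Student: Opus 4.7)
The strategy is to run the exact analogue of the proof of Propositions \ref{supersolutionprop} and \ref{initialconditionprop}, tracking the three sign flips in passing from $w_\eps$ to $h_\eps$: the constant shift $+\eps\tilde\delta$ becomes $-\eps\tilde\delta$, and the barrier terms $+\eps^\theta(y+\eps)^\gamma$, $+\eps^{1+\tau}t$ become negative. A key point to notice is that the $q$-correction $+\eps^{a+1}\sum_i \underline c_i(t)q_i$ keeps its sign. This is not a typo in \eqref{hepsub}: the role of $q$ is to cancel the leading $\eps^{a-1}$ bulk contribution $-\eps^{a-1}\sum_i \underline c_i\,\partial_x\phi_i$ produced by the translation of $\phi_i$, and this cancellation is insensitive to the signs of the outer corrections.

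\textbf{Boundary inequality on $y=0$.} First I would establish the subsolution counterpart of Lemma \ref{Monneaulem}: for
$$\tilde v_\eps(x,y,t):=\sum_{i=1}^N\left[\phi\bbs{\tfrac{x-\underline z_i(t)}{\eps},\tfrac{y}{\eps}}-\eps\,\underline c_i(t)\psi\bbs{\tfrac{x-\underline z_i(t)}{\eps},\tfrac{y}{\eps}}\right]-\eps\tilde\delta,$$
the same computations in \cite{Mon1}, now with $+\delta$ in place of $-\delta$ in \eqref{PNperturbedsub}, give $-\Delta\tilde v_\eps=0$ for $y>0$ and $\eps\partial_t\tilde v_\eps-\partial_y\tilde v_\eps+\frac{1}{\eps}W'(\tilde v_\eps)\leq -\frac{\delta}{2}$ at $y=0$. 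Plugging $h_\eps=\tilde v_\eps+\eps^{a+1}\sum_i\underline c_i q_i-\eps^\theta(y+\eps)^\gamma-\eps^{1+\tau}t$ into the boundary operator, using $q(x,0)=\partial_x q(x,0)=0$, and Taylor expanding $W'$ around $\tilde v_\eps$, the resulting error is $O(\eps^a|\ln\eps|)$ from \eqref{qderestimates} (with $R=2\eps^{-b}$) plus $O(\eps^{\theta+\gamma-1})+O(\eps^\tau T)$ from the Taylor remainder. All these are dominated by $-\delta/2$ for $\eps$ small, provided $\theta+\gamma>1$.

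\textbf{Bulk inequality for $y>0$.} The expansion of $\eps^a\partial_t h_\eps-\Delta h_\eps$ gives the same list of terms as in \eqref{eq_bulk1} with $\bar c_i,\dot{\bar c}_i,\bar z_i$ replaced by $\underline c_i,\dot{\underline c}_i,\underline z_i$, except that the last two terms carry a minus sign: $-\eps^{a+1+\tau}-\gamma(1-\gamma)\eps^\theta(y+\eps)^{\gamma-2}$. These are now the dominant \emph{negative} terms we need. I would then repeat verbatim the four-case analysis of Proposition \ref{supersolutionprop} (the regions $0<y\leq\eps^{1-b}$, $\eps^{1-kb}\leq y\leq\eps^{1-(k+1)b}$ for $k=1,\dots,k_0$, $\eps^{-\frac{k}{2}a}\leq y\leq\eps^{-\frac{k+1}{2}a}$ for $k=0,\dots,k_1$, and $y\geq\eps^{-1-r}$) using \eqref{eq:asymptotics for phi_x}, \eqref{psiinfinityy>0}, \eqref{qestimates}, \eqref{qderestimates}, \eqref{qderestimatesylarge}. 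In the first three regions $-\gamma(1-\gamma)\eps^\theta(y+\eps)^{\gamma-2}$ absorbs the positive errors, and in the last region $-\eps^{a+1+\tau}$ does. Conditions \eqref{coeffiestim:eq1}--\eqref{coeffiestim:eq6} are unchanged, so the same choice of parameters $0<b<\min\{1,a\}$, $\theta>0$ small, $\gamma<1$ close to $1$, and $0<\tau<r$ works.

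\textbf{Initial comparison and main obstacle.} For $h_\eps(x,y,0)\leq u_\eps^0(x,y)$, I would mirror Proposition \ref{initialconditionprop}. Since now $\underline z_i(0)=z_i^0+\delta$, the monotonicity of $\phi$ in $x$ gives the reversed inequality $\phi((x-\underline z_i(0))/\eps,y/\eps)\leq\phi((x-z_i^0)/\eps,y/\eps)$. Splitting into the same two cases (there exists $i_0$ with $|x-\underline z_{i_0}(0)|\leq\eps K$, or $|x-\underline z_i(0)|>\eps K$ for every $i$), and using \eqref{psiinfinityy>0}, \eqref{psiinfinityx>1}, and \eqref{qestimates}, the negative slack $-\eps\tilde\delta-\eps^\theta(y+\eps)^\gamma$ absorbs the $\psi$ and $q$ corrections. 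The main, and essentially only, point of care is this initial step: since the $q$-term is positive and retains its sign, the negative slack at $t=0$ must dominate the size $\eps^{a+1-b}|\ln\eps|$ of the $q$-correction, which holds because we took $b<a$. Beyond that, no new estimate is needed; the hardest ``obstacle'' is really just sign bookkeeping, and all three nontrivial ingredients -- the subsolution analogue of Lemma \ref{Monneaulem}, the case-by-case bulk estimates, and the two-case initial data comparison -- carry over by the symmetry of the construction.
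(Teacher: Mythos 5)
Your proposal is correct and follows exactly the route the paper intends: the paper itself only remarks that subsolutions are ``constructed in a manner similar to that of supersolutions,'' and your sign-tracking (the $-\eps\tilde\delta$, $-\eps^\theta(y+\eps)^\gamma$, $-\eps^{1+\tau}t$ flips, while $+\eps^{a+1}\sum_i\underline c_iq_i$ keeps its sign because it cancels the bulk error $-\eps^{a-1}\sum_i\underline c_i\partial_x\phi_i$ regardless of the perturbation's sign) together with the verbatim four-region bulk estimate and the two-case initial comparison are precisely the intended mirroring of Propositions \ref{supersolutionprop} and \ref{initialconditionprop}. One tiny imprecision worth noting: $\sum_i\underline c_i(0)q_i$ need not be positive since the velocities $\underline c_i(0)$ can change sign, but only its magnitude $\leq C\eps^{-b}|\ln\eps|$ is used, so this does not affect the argument.
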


\section{Proof of Theorem \ref{mainthm} }\label{sec4}
In this section, we complete the proof of Theorem \ref{mainthm} using the constructed super/subsolutions,  the comparison principle and the decay estimates established in previous sections.

\begin{proof}[Proof of Theorem \ref{mainthm}] 
Let $w_\eps$ and $h_\eps$ be the functions defined in \eqref{w_ep} and \eqref{hepsub}. 
Given any $T>0$, by Propositions \ref{supersolutionprop}, \ref{initialconditionprop} and \ref{subsolutionprop} there exist $\delta_0,\,\eps_0>0$ 
  and coefficients $\theta,\,\tau>0$, $0<b,\,\gamma <1$ such that for $0<\eps<\eps_0$ and $0<\delta<\delta_0$,  $w_\eps$ and $h_\eps$ are respectively super and subsolution to \eqref{maineq}  with initial datum \eqref{initialcondition} in $\R^2_+\times[0,T]$.
Since $w_\eps$ and $h_\eps$ are strictly sublinear in $y$, we can apply the comparison principle to conclude that 
\begin{equation}\label{uepinbetween}
h_\eps(x,y,t)\leq \ue(x,y,t)\leq w_\eps(x,y,t)\quad\text{for all }(x,y,t)\in \overline{\R^2_+}\times[0,T]. 
\end{equation}
Note that from \eqref{qestimates} with $R=2\eps^{-b}$, $0<b<1$,  
$$\eps^{a+1} \sum_{i=1}^N\bar  c_i(t)q\bbs{ \frac{x-\bar z_i(t)}{\eps} , \frac{y}{\eps}},\, \eps^{a+1} \sum_{i=1}^N\bar  c_i(t)q\bbs{ \frac{x-\underline z_i(t)}{\eps} , \frac{y}{\eps}}
\to 0\quad \text{as }\eps\to 0^+.
$$
Let $(x,y,t)\in \overline{\R^2_+}\times[0,\infty)$. Then,  from \eqref{uepinbetween}, 
\begin{equation}\label{limsupmainthm}
 \limsup_{\eps\to0^+}\ue(x,y,t)\leq   \limsup_{\eps\to0^+}\sum_{i=1}^N \phi\bbs{ \frac{x-\bar z_i(t)}{\eps}, \frac{y}{\eps}},
\end{equation}
and 
\begin{equation}\label{liminfmainthm}
\liminf_{\eps\to0^+}\ue(x,y,t)\geq \liminf_{\eps\to0^+}\sum_{i=1}^N \phi\bbs{ \frac{x-\underline z_i(t)}{\eps}, \frac{y}{\eps}},
\end{equation}
since the other terms in $w_\eps$ and $h_\eps$ vanish when $\eps$ goes to 0. 
From  \eqref{phiasymptocsylarge}, when $y>0$, 
$$ \limsup_{\eps\to0^+}\ue(x,y,t)\leq \frac{1}{\pi}\sum_{i=1}^N \left(\frac{\pi}{2}+\arctan\left(\frac{x-\bar z_i(t)}{y}\right)\right)$$
and 
$$\liminf_{\eps\to0^+}\ue(x,y,t)\geq \frac{1}{\pi}\sum_{i=1}^N \left(\frac{\pi}{2}+\arctan\left(\frac{x-\underline z_i(t)}{y}\right)\right).$$
Sending $\delta\to 0$ yields Statement (ii) of   Theorem \ref{mainthm}. 

Statement (i) follows from  \eqref{limsupmainthm}, \eqref{liminfmainthm} and  the following Lemma \ref{lem4.1}. This completes the proof of  Theorem \ref{mainthm}.
\end{proof} 
\begin{lem}\label{lem4.1}
 Let $v_0$ be defined as \eqref{v0def}, $\phi$ be the stationary layer solution solving \eqref{eq:standing wave}, and $\bar z_i,\, \underline z_i$ solve ODEs \eqref{PNperturbed}, \eqref{PNperturbedsub} respectively. Then we have
\begin{equation}\label{limsupmainthmphi}
 \limsup_{\delta\to0^+}\limsup_{(x',y',t')\to(x,0,t)\atop\eps\to0^+} \sum_{i=1}^N \phi\bbs{ \frac{x'-\bar z_i(t')}{\eps}, \frac{y'}{\eps}}\leq( v_0)^*(x,t),
\end{equation} and
\begin{equation}\label{liminfmainthmphi}
 \liminf_{\delta\to0^+}\liminf_{(x',y',t')\to(x,0,t)\atop\eps\to0^+} \sum_{i=1}^N \phi\bbs{ \frac{x'-\underline z_i(t')}{\eps}, \frac{y'}{\eps}}\geq (v_0)_*(x,t).
\end{equation} 
\end{lem}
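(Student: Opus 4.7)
My plan is to reduce both inequalities to the behavior of a single layer $\phi$, then invoke continuous dependence of the perturbed ODEs \eqref{PNperturbed}--\eqref{PNperturbedsub} on $\delta$ to return to $z_i(t)$. I will describe the argument for \eqref{limsupmainthmphi}; \eqref{liminfmainthmphi} will follow by a symmetric argument with $\liminf$, $H_*$ and the lower branch of \eqref{phiasymptocsylarge}.

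The key one-layer estimate I would establish first is that, for each fixed $\delta>0$ and each $i\in\{1,\ldots,N\}$,
\[
\limsup_{\substack{(x',y',t')\to(x,0,t)\\ \eps\to 0^+}}\phi\!\left(\frac{x'-\bar z_i(t')}{\eps},\frac{y'}{\eps}\right)\leq H^*(x-\bar z_i(t)).
\]
The case $x\geq \bar z_i(t)$ is trivial since $\phi\leq 1=H^*(x-\bar z_i(t))$. If $x<\bar z_i(t)$, set $\eta:=\bar z_i(t)-x>0$; for $(x',t')$ close to $(x,t)$ and $\eps$ small, $x'-\bar z_i(t')+\eps^{1/2}\leq -\eta/2$. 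I would then split on the sign of $y'$: for $y'>0$ I apply \eqref{phiasymptocsylarge}, whose right-hand side involves $\arctan$ of a quantity tending to $-\infty$ and so vanishes in the limit; for $y'=0$ I use \eqref{eq:asymptotics for phi} directly since $(x'-\bar z_i(t'))/\eps\to -\infty$. Summing over $i$ via subadditivity of $\limsup$ and then sending $\delta\to 0^+$ uses standard ODE continuous dependence: Lemma \ref{cdotlemma} keeps the minimum interparticle distance bounded below on $[0,T]$ uniformly in small $\delta$, so the vector field of \eqref{PNperturbed} remains regular and $\bar z_i(t;\delta)\to z_i(t)$. Upper semicontinuity of $H^*$ then yields $\limsup_{\delta\to 0^+} H^*(x-\bar z_i(t))\leq H^*(x-z_i(t))$, and a direct inspection identifies $\sum_i H^*(x-z_i(t))$ with $v_0^*(x,t)$.

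The main obstacle will be the uniform one-layer limit: along approaching sequences the ratio $y'/\eps$ is entirely unconstrained, so the pointwise asymptotics $\phi(\xi,\eta)\to H(\xi)$ as $\xi\to\pm\infty$ (for each fixed $\eta$) are not in themselves sufficient. What makes the analysis go through is precisely the Poisson-type inequality \eqref{phiasymptocsylarge}, which bounds $\phi(x/\eps,y/\eps)$ by a quantity depending only on the ratio $x/y$ modulo an $O(\eps^{1/2})$ error. Once this uniform bound is in place, the remaining steps reduce to semicontinuity of $H^*$ (respectively $H_*$) and continuous dependence for the ODE system \eqref{PN}.
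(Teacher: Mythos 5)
Your proposal is correct and follows essentially the same route as the paper: per-layer asymptotics of $\phi$ controlled by \eqref{phiasymptocsylarge} for $y'>0$ (and \eqref{eq:asymptotics for phi} at $y'=0$), subadditivity of $\limsup$, and continuous dependence of the perturbed ODE on $\delta$ together with upper semicontinuity of $H^*$. The only cosmetic difference is that you establish the one-layer bound $\limsup \phi \le H^*(x-\bar z_i(t))$ uniformly for every $i$ and then sum, whereas the paper first splits into the case where some $z_{i_0}(t)=x$ and the case where none does; the two organizations are equivalent.
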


\begin{proof}
  
Let us prove \eqref{limsupmainthmphi}. The proof of \eqref{liminfmainthmphi} follows with a similar argument. 
Let $H^*$ be defined by $H^*(s)=H(s)$ if $s\neq 0$ and $H^*(0)=1$. It is easy to prove that 
$$(v_0)^*(x,t)= \sum_{i=1}^N H^*(x-z_i(t)).$$
Fix $(x,t)$ and consider two cases.

\medskip

\noindent{\em Case 1: There exists $i_0$ such that $x=z_{i_0}(t)$. }

Let $(x_n,y_n,t_n)$ be a sequence converging to $(x,  0,t)$.  By Lemma \ref{cdotlemma}, for $\delta$ small enough,
we have that 
$$x-\bar{z}_i(t)>0\text{ for }i=1,\ldots,i_0-1\quad\text{and}\quad x-\bar{z}_i(t)<0\text{ for }i=i_0+1,\ldots N, $$
and for $\eps$  small enough and $n$ large enough, 
$$x_n-\bar{z}_i(t_n)\geq \eps^\frac14\text{ for }i=1,\ldots,i_0-1\quad\text{and}\quad x_n-\bar{z}_i(t_n)\leq -\eps^\frac14\text{ for }i=i_0+1,\ldots N.  $$
Assume that $y_n=0$,   for all $n$. Then, by the monotonicity of $\phi$ and its behavior at infinity we get
$$\lim_{n\to\infty\atop\eps\to0^+} \phi\bbs{ \frac{x_n-\bar z_i(t_n)}{\eps}, 0}=\begin{cases} 1,&\text{if }i=1,\ldots, i_0-1\\
0,&\text{if }i=i_0+1,\ldots, N
\end{cases}=H^*(x-\bar z_i(t)).
$$
 By \eqref{phiasymptocsylarge}, the same limit holds true when $y_n>0$   and $y_n\to 0$.  

When $i=i_0$, since $\phi<1$ we simply have
$$\limsup_{n\to\infty \atop\eps\to0^+} \phi\bbs{ \frac{x_n-\bar z_{i_0}(t_n)}{\eps}, \frac{y_n}{\eps}}\leq 1=H^*(x-z_{i_0}(t)).$$
Since the limits above are computed along any arbitrary sequence  $(x_n,y_n,t_n)$  converging to $(x,  0,t)$, we conclude that 
\begin{align*}
 \limsup_{\delta\to0^+}\limsup_{(x',y',t')\to(x,0,t)\atop\eps\to0^+} \sum_{i=1}^N \phi\bbs{ \frac{x'-\bar z_i(t')}{\eps}, \frac{y'}{\eps}}&\leq
  \limsup_{\delta\to0^+}\sum_{i=1}^N\limsup_{(x',y',t')\to(x,0,t)\atop\eps\to0^+ }\phi\bbs{ \frac{x'-\bar z_i(t')}{\eps}, \frac{y'}{\eps}}\\&
  \leq  \limsup_{\delta\to0^+}\sum_{i\neq i_0}^N H^*(x-\bar z_i(t))+H^*(x-z_{i_0}(t))\\&
  =(v_0)^*(x,t).
\end{align*}
This proves  \eqref{limsupmainthmphi} in Case 1. 

\medskip

\noindent{\em Case 2: $x\neq z_{i}(t)$, for all $i=1,\ldots, N.$ }

Arguing as in Case 1, we obtain that, for all $i=1,\ldots, N$, 
$$\limsup_{(x',y',t')\to(x,0,t)\atop\eps\to0^+} \phi\bbs{ \frac{x'-\bar z_i(t')}{\eps}, \frac{y'}{\eps}}=H^*(x-\bar z_i(t)),$$
from which \eqref{limsupmainthmphi} follows. 
\end{proof}

\section*{Acknowledgment}
The first author has been partially supported by NSF Grant DMS-2204288 ``Bulk-Interface Coupled Response in Novel Materials: Pattern Formation and Interactive Migration".

The second author has been supported by the NSF Grant DMS-2155156 ``Nonlinear PDE methods in the study of interphases.''

\bibliographystyle{alpha}
\bibliography{Dislocation_bib}

\end{document}